\documentclass[1p,12pt]{elsarticle}

\usepackage{graphicx}
\usepackage{amsmath,amsxtra,amssymb,latexsym, amscd,amsthm}

\theoremstyle{plain}
\newtheorem{thm}{Theorem}[section]
\newtheorem{cor}[thm]{Corollary}

\newtheorem{lem}[thm]{Lemma}

\theoremstyle{definition}

\newtheorem{rmk}[thm]{Remark}

\textheight 21truecm \textwidth 15truecm
\numberwithin{equation}{section}
\numberwithin{figure}{section}
\numberwithin{table}{section}

\newcommand{\T}{\operatorname{\mathbb{T}}}
\newcommand{\M}{\operatorname{M}}

\newcommand{\up}{\operatorname{up}}
\newcommand{\down}{\operatorname{down}}
\newcommand{\level}{\operatorname{level}}

\bibliographystyle{elsarticle-num}

\begin{document}

\begin{frontmatter}
\title{\textbf{Double Aztec Rectangles}}


\author{Tri Lai\corref{cor1}\fnref{myfootnote1}}
\address{Institute for Mathematics and its Applications\\ University of Minnesota\\ Minneapolis, MN 55455}
\fntext[myfootnote1]{This research was supported in part by the Institute for Mathematics and its Applications with funds provided by the National Science Foundation (grant no. DMS-0931945).}
\cortext[cor1]{Corresponding author, email: tmlai@ima.umn.edu, tel: 612-626-8319}

\begin{abstract}
We investigate the connection between lozenge tilings and domino tilings by introducing a new family of regions obtained by attaching two different Aztec rectangles.
We prove a simple product formula for the generating functions of the tilings of the new regions, which involves the statistics as in the Aztec diamond theorem (Elkies, Kuperberg, Larsen, and Propp, J. Algebraic Combin. 1992). Moreover, we consider the connection between
the generating function and MacMahon's $q$-enumeration of plane partitions fitting in a given box
\end{abstract}

\begin{keyword}
Domino tilings\sep lozenge tilings \sep perfect matchings \sep plane partitions \sep urban renewal
\MSC[2010] 05A15\sep05C70
\end{keyword}

\end{frontmatter}

\begin{abstract}
We investigate the connection between lozenge tilings and domino tilings by introducing a new family of regions obtained by attaching two different Aztec rectangles.
We prove a simple product formula for the generating functions of the tilings of the new regions, which involves the statistics as in the Aztec diamond theorem (Elkies, Kuperberg, Larsen, and Propp, J. Algebraic Combin. 1992). Moreover, we consider the connection between
the generating function and MacMahon's $q$-enumeration of plane partitions fitting in a given box.
  \bigskip

  \noindent \textbf{Keywords:} Aztec diamond, domino tiling, lozenge tiling, perfect matching, plane partition
\end{abstract}

\section{Introduction}

A \emph{plane partition} is a rectangular array of non-negative integers so that all columns are weakly decreasing from top to bottom and  all rows are weakly decreasing from left to right.
A plane partition having $a$ rows and $b$ columns with entries  at most $c$ is identified with it $3$-D interpretation --- a stack of unit cubes fitting in an $a \times b \times c$ box.
The latter stack in turn corresponds to a lozenge tiling of a centrally symmetric hexagon of side-lengths $a,b,c,a,b,c$ (in clockwise order, starting from the northwest side) on the triangular
 lattice. We denote this hexagon by $H_{a,b,c}$ (see Figure \ref{exampleAR}(a) for an example). Here, a \emph{lozenge} (or \emph{unit rhombus}) is union of any two unit equilateral triangles
 sharing an edge; and a \emph{lozenge tiling} of a region (on the triangular lattice) is a covering of the region by lozenges so that there are no gaps or overlaps.

\begin{figure}\centering
\includegraphics[width=14cm]{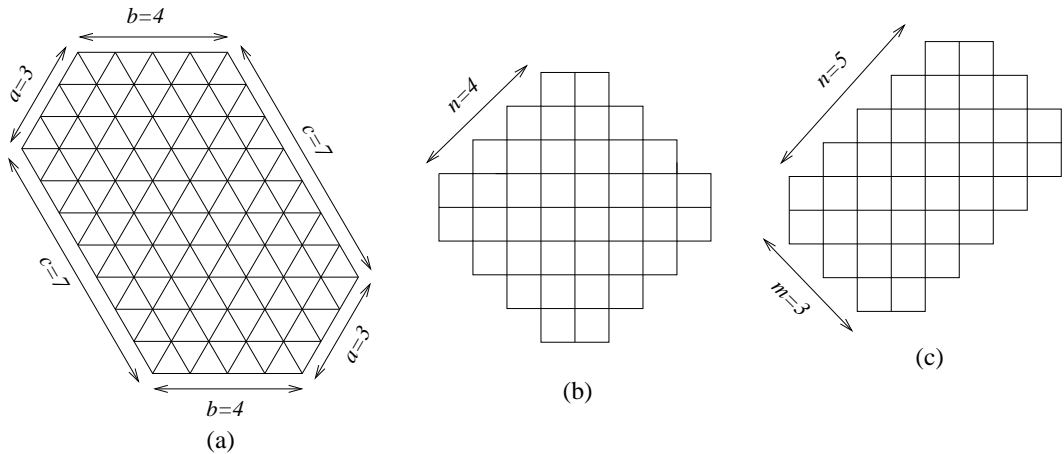}
\caption{(a) The hexagon $H_{3,4,7}$. (b) The Aztec diamond $\mathcal{AD}_{4}$. (c) The Aztec rectangle $\mathcal{AR}_{4,6}$.}
\label{exampleAR}
\end{figure}

Let $q$ be an indeterminate. The \emph{$q$-integer} $[n]_q$ is defined as $[n]_q:=1+q+\dotsc+q^{n-1}$. MacMahon \cite{Mac} proved that
\begin{equation}\label{qMac}
\sum_{\pi}q^{|\pi|}=\prod_{i=1}^{a}\prod_{j=1}^{b}\prod_{k=1}^{c}\frac{[i+j+k-1]_q}{[i+j+t-2]_q},
\end{equation}
where the sum is taken over all plane partitions $\pi$ fitting in an $a\times b \times c$ box and where $|\pi|$ is the number of unit cubes in $\pi$ (i.e. the \emph{volume} of $\pi$).
By letting $q= 1$, this deduces that
\begin{equation}\label{Maceq}
\T\big(H_{a,b,c}\big)=\prod_{i=1}^{a}\prod_{j=1}^{b}\prod_{t=1}^{c}\frac{i+j+t-1}{i+j+t-2},
\end{equation}
where we use the notation $\T(R)$ for the number of tilings of a region $R$.

We now consider regions on a different lattice, the square lattice.  On this lattice, we are interested in \emph{domino tilings}--coverings of a region by dominoes so that there are no gaps or overlaps.
 Here, a \emph{domino} is union of any two unit square sharing an edge. Two central objects in enumeration of domino tilings are the Aztec diamond (see Figure \ref{exampleAR}(b) for the Aztec diamond of order $4$)
 and its natural generalization, the Aztec rectangle (see Figure \ref{exampleAR}(c) for an Aztec rectangle of order $3\times 5$). We denote by $\mathcal{AD}_n$ the Aztec diamond of order $n$,
 and $\mathcal{AR}_{m,n}$ the Aztec rectangle of order $m\times n$.
  One of the crucial results in enumeration of domino tilings is Aztec diamond theorem by Elkies, Kuperberg, Larsen and Propp \cite{Elkies1,Elkies2}, which will stated in the next paragraph.

  Let $T$ be a domino tiling of $\mathcal{AD}_n$. We denote by $r(T)$ is the smallest number of elementary moves ($90^{\circ}$ rotation of a $2\times 2$ block consisting of two vertical or horizontal dominoes as in
  Figure \ref{Drawdominob}) to obtain the tiling $T$ from the \emph{minimal} tiling $T_0$---the tiling consisting of all horizontal dominoes.
  We call $r(T)$ the \emph{rank} of the tiling $T$. For example, the tilings in Figures \ref{ranktiling}(b), (c), (d) have ranks 1,2,5, respectively. We  are also interested in half number of vertical dominoes in $T$, denoted by $v(T)$. The Aztec diamond theorem
  says that the generating function of domino tilings of an Aztec diamond with the two statistics $r(T)$ and $v(T)$ is given by a simple product formula.
\begin{figure}\centering
\includegraphics[width=3cm]{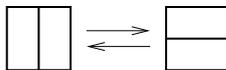}
\caption{The elementary moves: rotation of a $2\times 2$ block consisting of two vertical or horizontal dominoes.}
\label{Drawdominob}
\end{figure}

\begin{figure}\centering
\includegraphics[width=12cm]{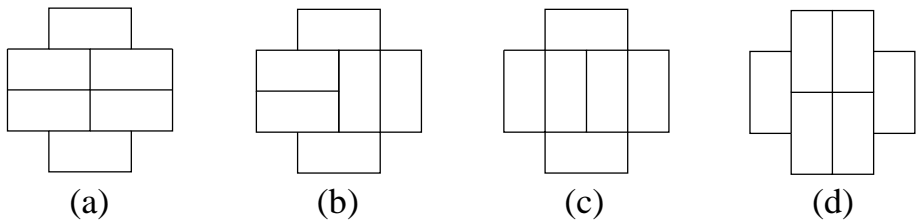}
\caption{Obtaining the other tilings of $\mathcal{AD}_2$ from the minimal tiling on the left by using the elementary moves.}
\label{ranktiling}
\end{figure}

\begin{thm}[Aztec Diamond Theorem \cite{Elkies1,Elkies2}]\label{Aztecthm} For any positive integer $n$ and indeterminates $t$ and $q$
\begin{equation}\label{Azteceq}
\sum_{T}t^{v(T)}q^{r(T)}=\prod_{k=0}^{n-1}(1+tq^{2k+1})^{n-k},
\end{equation}
where the sum is taken over all tilings $T$ of the Aztec diamond of order $n$.
\end{thm}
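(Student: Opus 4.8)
The plan is to prove the two-variable identity by induction on $n$, reducing it to a single recurrence that the domino-shuffling operation of Elkies--Kuperberg--Larsen--Propp supplies. Writing $f_n(t,q) := \sum_T t^{v(T)} q^{r(T)}$ for the left-hand side, I first observe that the claimed product satisfies
\[
f_n(t,q) = (1+tq)^n\, f_{n-1}(tq^2, q),
\]
with base case $f_1(t,q) = 1 + tq$ (the two tilings of $\mathcal{AD}_1$ being the all-horizontal one, of rank $0$ and $v=0$, and the all-vertical one, of rank $1$ and $v=1$). Indeed, substituting $t \mapsto tq^2$ shifts the exponent of $q$ in the $k$-th factor $(1+tq^{2k+1})^{(n-1)-k}$ of $f_{n-1}$ up by $2$, which reindexes the whole product to the tail $\prod_{k=1}^{n-1}(1+tq^{2k+1})^{n-k}$ of the target; multiplying by $(1+tq)^n$ restores the missing $k=0$ factor. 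Thus it suffices to realize this recurrence combinatorially.

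Next I would set up domino shuffling as a weight-tracking correspondence between tilings of $\mathcal{AD}_{n-1}$, each decorated by a choice of filling for every newly created block, and tilings of $\mathcal{AD}_n$. Since $\mathcal{AD}_n$ has $n(n+1)$ dominoes and $\mathcal{AD}_{n-1}$ has $(n-1)n$, sliding a tiling of $\mathcal{AD}_{n-1}$ outward leaves exactly $n$ empty $2\times 2$ blocks to be filled, each independently by two horizontal or two vertical dominoes. A horizontal filling contributes the trivial weight, while a vertical filling creates one new vertical pair and raises the rank by one, contributing $tq$; hence the creation step accounts for the factor $(1+tq)^n$. The substitution $t \mapsto tq^2$ must then be exactly the effect of the sliding step on the inherited tiling $T'$: each of its $v(T')$ vertical pairs is preserved, so the $t$-exponent is inherited unchanged, while the rank of the slid configuration should exceed $r(T')$ by precisely $2\,v(T')$, producing the extra $q^{2v(T')}$ encoded by $t \mapsto tq^2$. (One checks directly in the case $n=2$ that this per-tiling accounting reproduces all eight tilings of $\mathcal{AD}_2$ with the correct statistics.)

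The main obstacle is the rank bookkeeping, i.e.\ verifying that sliding raises the rank by exactly twice the number of vertical dominoes and that filling a block vertically raises it by exactly one. I would handle this through the standard height-function model of domino tilings: rank is, up to the fixed normalization that makes each elementary flip change the total height by a constant, the sum over vertices of $h_T - h_{T_0}$, and both the sliding map and the block fillings act on the height function by explicit, local affine modifications whose contribution to this sum can be read off directly. Establishing these two local height computations is the crux; once they are in hand the recurrence follows, and a one-line induction anchored at $f_1 = 1+tq$ yields the product formula of Theorem~\ref{Aztecthm}.
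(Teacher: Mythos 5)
First, a point of reference: the paper does not prove Theorem \ref{Aztecthm} at all --- it quotes it from \cite{Elkies1,Elkies2} --- so your proposal must be judged against the standard Elkies--Kuperberg--Larsen--Propp shuffling proof that it is clearly modeled on. The purely algebraic part of your plan is correct: the product does satisfy $f_n(t,q)=(1+tq)^n\,f_{n-1}(tq^2,q)$ with $f_1(t,q)=1+tq$, and your weight accounting (factor $(1+tq)$ per newly created block, substitution $t\mapsto tq^2$ from sliding) is exactly what is needed to convert that recurrence into the theorem, \emph{if} the combinatorial correspondence you describe existed.

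The genuine gap is the assertion that ``sliding a tiling of $\mathcal{AD}_{n-1}$ outward leaves exactly $n$ empty $2\times 2$ blocks to be filled, each independently.'' Domino shuffling is not a per-tiling slide-and-fill map: it has a \emph{destruction} phase that your sketch omits. Under any sliding rule (each domino moves one unit in a direction determined by its orientation and the colors of its squares), the $2\times 2$ cells of one parity class are ``colliding'' cells: two parallel dominoes occupying such a cell are assigned head-on directions and cannot be slid; they must be deleted first, and after deletion and sliding the uncovered region decomposes into $n+d$ free cells, where $d$ is the number of deleted pairs --- not $n$. No tiling of $\mathcal{AD}_1$ contains a colliding pair, which is why your $n=2$ check passes and is misleading; the failure first occurs at $n=3$. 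Concretely, take the tiling of $\mathcal{AD}_2$ whose two extreme columns are vertical dominoes and whose central $2\times 2$ block is two horizontal dominoes (rank $3$): under the forced convention the two central dominoes collide (naive sliding makes dominoes overlap, or, if one lets a colliding pair ``swap in place,'' the uncovered part of $\mathcal{AD}_3$ contains an isolated $1\times 4$ strip that cannot be grouped into $2\times 2$ blocks); the correct shuffle deletes that pair, and the slid core then leaves $4=n+1$ free cells. Consequently the correspondence is not a bijection between pairs $(T',\varepsilon)$ with $\varepsilon\in\{H,V\}^n$ and tilings of $\mathcal{AD}_n$; it is a correspondence between classes sharing a common core, with $2^{d}$ tilings of $\mathcal{AD}_{n-1}$ facing $2^{n+d}$ tilings of $\mathcal{AD}_n$. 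Repairing the argument therefore needs more than your two deferred height computations: one must also show that at destroyed cells the rank relation between the two fillings is \emph{reversed} (so those cells contribute $(q+tq^2)^d=q^d(1+tq)^d$ on the $\mathcal{AD}_{n-1}$ side), together with a core-rank identity stating that the rank of the slid core completed horizontally exceeds the rank of the core completed vertically by $2v(\mathrm{core})+d$. Only with this destruction/creation bookkeeping --- which is the actual content of the EKLP proof --- do the factor $(1+tq)^n$ and the substitution $t\mapsto tq^2$ emerge; as written, your central combinatorial claim is false for every $n\geq 3$.
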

The $q=t=1$ specialization of Theorem \ref{Aztecthm} implies that the number of domino tilings of the Aztec diamond of  order $n$ is equal to $2^{n(n+1)/2}$.

It seems that the enumeration of domino tilings and the enumeration of lozenge tilings are two isolated topics.
There is not many connections between these two subfields. In the effort to find the connection between these two tiling enumerations, we introduce a new family
 of regions on the square lattice, which are obtained by attaching two different Aztec rectangles (see Figure \ref{doublerectangle} for an example).
 We call these new regions \emph{double Aztec rectangles} (we will present the precise definition of a double Aztec rectangle in the next section).

\begin{figure}\centering
\begin{picture}(0,0)%
\includegraphics{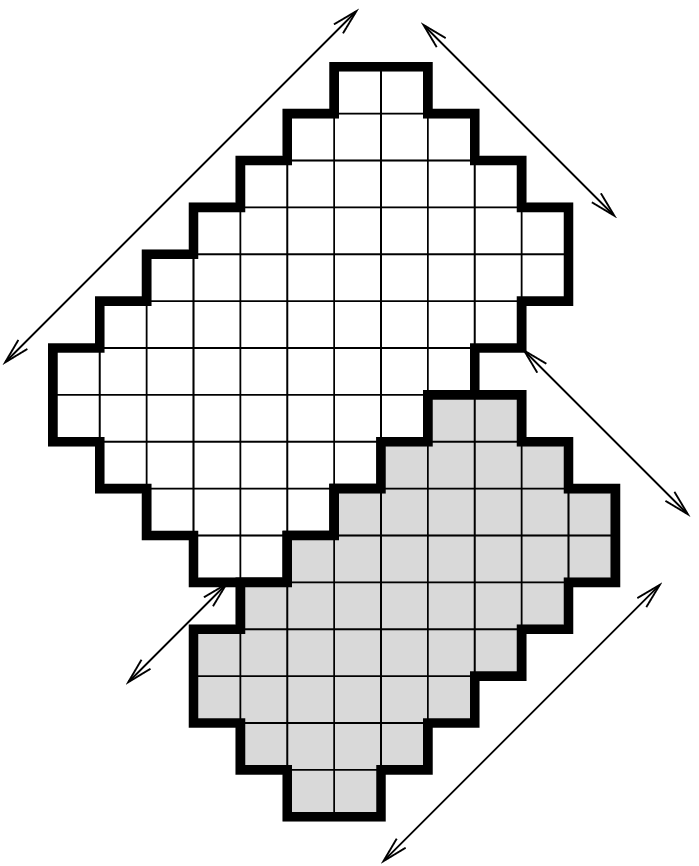}%
\end{picture}
\setlength{\unitlength}{3947sp}%
\begingroup\makeatletter\ifx\SetFigFont\undefined%
\gdef\SetFigFont#1#2#3#4#5{%
  \reset@font\fontsize{#1}{#2pt}%
  \fontfamily{#3}\fontseries{#4}\fontshape{#5}%
  \selectfont}%
\fi\endgroup%
\begin{picture}(3506,4128)(230,-3570)
\put(592,-224){\makebox(0,0)[lb]{\smash{{\SetFigFont{12}{14.4}{\rmdefault}{\mddefault}{\updefault}{$n_1=7$}%
}}}}
\put(2836,249){\makebox(0,0)[lb]{\smash{{\SetFigFont{12}{14.4}{\rmdefault}{\mddefault}{\updefault}{$m_1=4$}%
}}}}
\put(3190,-1287){\makebox(0,0)[lb]{\smash{{\SetFigFont{12}{14.4}{\rmdefault}{\mddefault}{\updefault}{$m_2=3$}%
}}}}
\put(2836,-3059){\makebox(0,0)[lb]{\smash{{\SetFigFont{12}{14.4}{\rmdefault}{\mddefault}{\updefault}{$n_2=6$}%
}}}}
\put(520,-2468){\makebox(0,0)[lb]{\smash{{\SetFigFont{12}{14.4}{\rmdefault}{\mddefault}{\updefault}{$k=2$}%
}}}}
\end{picture}
\caption{The double rectangle $\mathcal{DR}_{3,6,2}^{4,7}$ is obtained by matching two Aztec rectangles $\mathcal{AR}_{4,7}$ (white) and $\mathcal{AR}_{3,6}$ (shaded).}
\label{doublerectangle}
\end{figure}

 We also investigate the generating function of the domino tilings of the double Aztec rectangles involving the two statistics $r(T)$ and $v(T)$ as in the case of the Aztec diamonds.
 We prove that the generating function is given by a simple product formula (see Theorem \ref{mainweight}).
 More surprisingly, the latter formula is roughly the product of an instance of the formula on the right-hand side of (\ref{Azteceq}) in Aztec diamond Theorem and an instance of the expression on the right-hand side of  (\ref{qMac}) in MacMahon's Theorem.  This means that one can view our family of double Aztec rectangles as a ``bridge" connecting the enumerations of domino tilings and lozenge tilings.

 \section{Rank of a tiling, the minimal tiling, and the statement of main result}

First, we give a precise definition of a double Aztec diamond as follows.

\medskip

 Consider two Aztec rectangles $\mathcal{AR}_{m_1,n_1}$ and $\mathcal{AR}_{m_2,n_2}$ with $m_1\leq n_2$ and $m_2\leq n_2$.
  We match the southeast side of $\mathcal{AR}_{m_1,n_1}$ to the northwest side  of $\mathcal{AR}_{m_2,n_2}$ so that the first square
  on the southeast side of $\mathcal{AR}_{m_1,n_1}$ stays immediately on the right of the $(k+1)$-th square of the northwest side of $\mathcal{AR}_{m_2,n_2}$.
  We denote by $\mathcal{DR}_{m_1,n_1,k}^{m_2,n_2}$ the resulting region (see Figure  \ref{doublerectangle}; the white Aztec rectangle indicates $\mathcal{AR}_{m_1,n_1}$
  and the shaded one indicates $\mathcal{AR}_{m_2,n_2}$). In addition, we call $\mathcal{AR}_{m_1,n_1}$ and $\mathcal{AR}_{m_2,n_2}$
    the \textit{upper} and \textit{lower parts} of the double Aztec rectangle, respectively.

\begin{figure}\centering
\includegraphics[width=9cm]{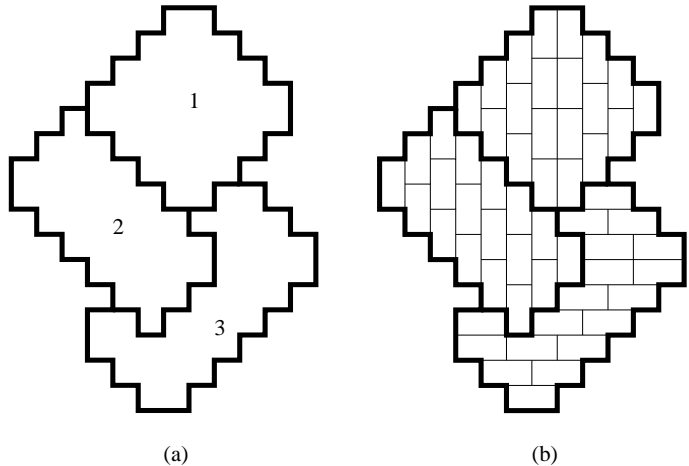}
\caption{The minimal tiling of the  double Aztec rectangle $\mathcal{DR}_{3,6,2}^{4,7}$.}
\label{double6}
\end{figure}

We notice that, in general, the double Aztec rectangle $D:=\mathcal{DR}_{m_1,n_1,k}^{m_2,n_2}$ does \textit{not} admit a tiling consisting of all horizontal dominoes. This means that we need another candidate for the minimal tiling of the double Aztec rectangle as follows.

We divide the region $D$ into 3 disjoint parts (labelled by $1,2,3$) as in Figure \ref{double6}(a). In precise, part $1$ is an Aztec diamond of order $m_1$ on the top of the region;
 part $2$ is an Aztec rectangle $\mathcal{AR}_{m_1+k,n_1-m_1}$, where the top and the bottom are shrunk to size 1; and part $3$ is the remaining.
Next, we cover the parts $1$ and $2$ by vertical dominoes, and the part $3$ by horizontal dominoes (see Figure \ref{double6}(b)). We call the resulting tiling $T_0$ the \emph{minimal tiling}
  of the double Aztec rectangle.   Similar to the case of Aztec diamonds, we now define the \emph{rank} $r(T)$ of a tiling $T$ of the region $D$ to be the smallest number of elementary moves to obtain
  $T$ from $T_0$. We also use the notation $v(T)$ for one half of the number of vertical dominoes in $T$. 
  
\begin{rmk}
The detailed explanation for the choice of the tiling $T_0$ as the minimal tiling will be shown in Section 5.  However, we can explain intuitively as follows. Similar to the correspondence of Fu and Eu in the case of Aztec diamonds \cite{Eu}, each domino tiling $T$ of the double
  Aztec rectangle are in bijection with a family of non-intersecting lattice paths $\textbf{P}_T$. Moreover, we will show that the difference between the rank of the tiling $T$ and the 
  total area underneath the lattice paths in the family $\textbf{P}_T$ is a constant, and that and the tiling $T_0$ corresponds to the family having the smallest underneath area. This justifies the choice of the minimal tiling $T_0$.
\end{rmk}
  
The generating function of domino tilings of the double Aztec rectangle is given by the
  theorem stated below.

\begin{thm}\label{mainweight}
Assume that $m_1,m_2,n_1,n_2$ are positive integers, and $k$ is a non-negative integer so that $m_1\leq n_1$, $m_2\leq n_2$, $k\leq \min (m_2,n_2-1)$, and $n_1-m_1=n_2-m_2$. Then
\begin{align}
\sum_{T}t^{r(T)}q^{v(T)}&=t^{\binom{m_1+1}{2}+\binom{m_2+1}{2}+(n_1-m_1)(m_1+k)/2}q^{N+(n_1-m_1)(m_1+k)+A}\notag\\
&\times\prod_{i=0}^{m_1-1}(\bigstar'_i)^{m_1-i}\prod_{i=0}^{m_2-1}(\bigstar_i)^{m_2-i}\prod_{i=1}^{n_1-m_1}\prod_{j=1}^{m_2-k+1}\prod_{t=1}^{m_1+k}\frac{[i+j+t-1]_{q^2}}{[i+j+t-2]_{q^2}},
\end{align}
where the sum is taken over all tilings $T$ of the double Aztec rectangle $\mathcal{DR}_{m_1,n_1,k}^{m_2,n_2}$,  where
\begin{align*}
N=&m_1(m_1+1)(n_1-1)-m_2(m_2+1)(n_2-1)\notag\\
&+(n_1-m_1)(2m_2^2+m_2m_1+m_2n_1+k^2+2km_1+m_1n_1+k-m_2),
\end{align*}
and
\begin{align*}
A=&\frac{2m_2(m_2-1)(m_2+1)}{3}+(m_2-k+1)(m_2+n_2-1)(n_1-m_1)\notag\\
&+\frac{m_1(m_1+1)(2k+2m_1+2n_2-1)}{2},\end{align*}
and where $\bigstar_i=q^{2m_2+2n_2-3}(1+t^{-1}q^{-2i-1})$ and $\bigstar_i'=q^{2m_2+2k+1}(1+t^{-1}q^{2i+1})$.
\end{thm}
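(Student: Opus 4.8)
The plan is to reduce the weighted count of domino tilings to a weighted enumeration of families of non-intersecting lattice paths, and then to a product formula. First I would set up the Fu--Eu-type bijection announced in the Remark: reading off the positions of the vertical dominoes along the successive diagonals of $\mathcal{DR}_{m_1,n_1,k}^{m_2,n_2}$ produces a family $\mathbf{P}_T$ of non-intersecting lattice paths, and conversely. Each path records how the vertical dominoes ``flow'' through the region, so the number of vertical dominoes---hence $v(T)$---is exactly the total number of (say) up-steps in $\mathbf{P}_T$, up to an additive constant coming from the forced dominoes in parts $1$ and $2$ of the minimal tiling.

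Next I would translate the rank statistic. Following the Remark, an elementary move corresponds to pushing one lattice-path step across a unit cell, so $r(T)$ equals the total area underneath the paths plus a constant determined by $T_0$ (which by construction minimizes this area). Collecting the two constants gives the scalar prefactor $t^{\binom{m_1+1}{2}+\cdots}q^{N+\cdots+A}$, and what remains is a purely combinatorial generating function $\sum_{\mathbf{P}} t^{\area(\mathbf{P})} q^{\#\text{up}(\mathbf{P})}$ over non-intersecting path families with prescribed endpoints.

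To weight the paths I would attach to each unit step a monomial in $t$ and $q$ so that the product of step-weights along a family reproduces $t^{\area}q^{\#\text{up}}$; a standard choice makes the weight of a step depend only on the diagonal it crosses (for the area, via a telescoping $t$-power) and on its type (for the $q$-exponent). With such multiplicative weights the Lindstr\"om--Gessel--Viennot lemma expresses the generating function as a determinant of weighted single-path counts, i.e. $q^2$-binomial-type sums. The two geometric regimes of the region then separate: the paths are ``free'' near the two Aztec corners (of orders $m_1$ and $m_2$), contributing the Aztec-diamond factors $\prod(\bigstar'_i)^{m_1-i}$ and $\prod(\bigstar_i)^{m_2-i}$---whose $(1+t^{-1}q^{\pm(2i+1)})$ shapes are precisely the right-hand side of Theorem \ref{Aztecthm}---while in the middle they are confined to a parallelogram, where the determinant is a Lindstr\"om count of plane partitions in an $(n_1-m_1)\times(m_2-k+1)\times(m_1+k)$ box, evaluating by MacMahon's formula \eqref{qMac} with $q\mapsto q^2$ to the triple product.

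The main obstacle I expect is the exact bookkeeping that turns the two statistics into these clean additive constants and multiplicative step-weights: verifying that $r(T)-\area(\mathbf{P}_T)$ and $2v(T)-\#\text{up}(\mathbf{P}_T)$ are genuinely independent of $T$ requires a careful local analysis of how a single elementary move changes both the tiling and the path family, together with an explicit evaluation of these constants on $T_0$ to pin down $N$, $A$, and the binomial exponents. The determinant evaluation producing the Aztec$\,\times\,$MacMahon factorization is the second delicate point, since one must check that the interaction between the ``free'' and ``confined'' portions does not spoil the factorization; an alternative route, better matching the paper's \emph{urban renewal} theme, is to realize this splitting directly on the dual graph by a sequence of spider moves that force the two Aztec corners and peel the region down to a hexagonal graph counted by MacMahon.
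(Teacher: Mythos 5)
Your first half matches the paper's own proof of Theorem \ref{mainweight} almost exactly: in Section 5 the paper uses the Eu--Fu-type correspondence with partial Schr\"oder paths, observes that each elementary move changes the underneath area (equivalently, the $q$-exponent $\alpha(T)$ of the weight of the path family) by one, so that $r(T)-\alpha(T)$ is independent of $T$, evaluates this constant on $T_0$ (this is where $A$ comes from), and uses the conservation law $\up(\mathbf{P})+\down(\mathbf{P})+2\level(\mathbf{P})=\mathrm{const}$ to trade the vertical-domino statistic for $-\level(\mathbf{P})$. This converts the generating function into an explicit monomial prefactor times $\M\bigl(\mathcal{DR}_{m_1,n_1,k}^{m_2,n_2}(1,1,t^{-1},q,q^2)\bigr)$, a tiling count with multiplicative domino weights. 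Up to minor bookkeeping (it is $\up+\down$, not up-steps alone, that counts the vertical dominoes, and the $t$-statistic is carried by the level steps), you have this part right.

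The genuine gap is in your second half, i.e.\ in the evaluation of that weighted count. Your main route is to apply Lindstr\"om--Gessel--Viennot and then assert that the resulting determinant ``separates'' into the Aztec factors $\prod_i(\bigstar'_i)^{m_1-i}\prod_i(\bigstar_i)^{m_2-i}$ (paths ``free'' near the corners) times a MacMahon box count (paths confined to the middle). No mechanism is offered for this factorization, and it does not follow from the geometry: an LGV determinant does not factor merely because the region has geometrically distinct regimes---the non-intersection condition couples all the paths, and the product formula is precisely the content of the theorem, not something readable off the determinant's shape. You flag this as ``delicate,'' but it is the entire difficulty. The paper instead proves the evaluation as a separate statement (Theorem \ref{weightdouble}) by iterated graph replacement: Lemma \ref{weighttransform} (built from the vertex-splitting, spider, and star lemmas) peels one layer off the Aztec-rectangle portion of the dual graph, each application emitting a prefactor of the form $(ad+bc)^{m}q^{m(n-1)+\binom{m}{2}}$; iterating $m_1$ times on the upper part and $m_2$ times on the lower part produces exactly the telescoping exponents $m_1-i$ and $m_2-i$, and what remains is the dual graph of a weighted hexagon $H_{n_1-m_1,m_2-k+1,m_1+k}$, whose weights are then matched (via a second, lozenge-path argument) to plane-partition volume and summed by MacMahon's formula (\ref{qMac}) with $q\mapsto q^2$; this bookkeeping is also where $N$ arises. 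Your closing sentence does name this urban-renewal alternative, but a one-line mention without the replacement lemma and its weight accounting leaves the argument incomplete: as written, neither of your two routes actually establishes the product formula.
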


By letting $t=q=1$, we deduce the following elegant corollary  from Theorem \ref{mainweight}.
\begin{cor}\label{coroweight}
Assume that $m_1,m_2,n_1,n_2$ are positive integers, and $k$ is a non-negative integer so that $m_1\leq n_1$, $m_2\leq n_2$, $k\leq \min (m_2,n_2-1)$, and $n_1-m_1=n_2-m_2$. Then
\begin{equation}\label{coroeq}
\T\left(\mathcal{DR}_{m_1,n_1,k}^{m_2,n_2}\right)=2^{\binom{m_1+1}{2}+\binom{m_2+1}{2}}\T\left(H_{n_1-m_1,m_2-k+1,m_1+k}\right).
\end{equation}
\end{cor}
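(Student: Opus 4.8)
The plan is to obtain Corollary \ref{coroweight} directly as the $t=q=1$ specialization of Theorem \ref{mainweight}. First I would note that at $t=q=1$ the left-hand side $\sum_T t^{r(T)}q^{v(T)}$ collapses to $\sum_T 1$, which by definition is the total number of domino tilings $\T(\mathcal{DR}_{m_1,n_1,k}^{m_2,n_2})$. All that remains is to evaluate each factor on the right-hand side at $t=q=1$ and to check that the pieces assemble into the claimed product.

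Next I would treat the two single products. Since $\bigstar_i=q^{2m_2+2n_2-3}(1+t^{-1}q^{-2i-1})$ and $\bigstar_i'=q^{2m_2+2k+1}(1+t^{-1}q^{2i+1})$, at $t=q=1$ each reduces to $1\cdot(1+1)=2$, losing all dependence on $i$. Hence $\prod_{i=0}^{m_1-1}(\bigstar_i')^{m_1-i}=2^{\sum_{i=0}^{m_1-1}(m_1-i)}=2^{\binom{m_1+1}{2}}$ and likewise $\prod_{i=0}^{m_2-1}(\bigstar_i)^{m_2-i}=2^{\binom{m_2+1}{2}}$, using the elementary identity $\sum_{i=0}^{m-1}(m-i)=m+(m-1)+\dots+1=\binom{m+1}{2}$. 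The scalar prefactors $t^{\binom{m_1+1}{2}+\binom{m_2+1}{2}+(n_1-m_1)(m_1+k)/2}$ and $q^{N+(n_1-m_1)(m_1+k)+A}$ both specialize to $1$ and drop out, so the complicated exponents $N$ and $A$ play no role in the $q=t=1$ count.

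Finally I would evaluate the triple product. At $q=1$ the $q^2$-integer $[n]_{q^2}$ specializes to $n$, so the surviving product becomes $\prod_{i=1}^{n_1-m_1}\prod_{j=1}^{m_2-k+1}\prod_{t=1}^{m_1+k}\frac{i+j+t-1}{i+j+t-2}$. Comparing this with MacMahon's formula (\ref{Maceq}) under the substitution $(a,b,c)=(n_1-m_1,\,m_2-k+1,\,m_1+k)$, I recognize it as exactly $\T(H_{n_1-m_1,m_2-k+1,m_1+k})$. Multiplying the three evaluated factors yields $2^{\binom{m_1+1}{2}+\binom{m_2+1}{2}}\,\T(H_{n_1-m_1,m_2-k+1,m_1+k})$, which is precisely (\ref{coroeq}). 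Since every factor specializes cleanly, there is no genuine obstacle here; the only points demanding care are verifying that both $\bigstar_i$ and $\bigstar_i'$ truly shed their $i$-dependence at $t=q=1$, so that the single products collapse to honest powers of $2$, and confirming that the index ranges $(n_1-m_1,\,m_2-k+1,\,m_1+k)$ of the residual triple product match the box dimensions in MacMahon's formula. The corollary is thus a short bookkeeping exercise once Theorem \ref{mainweight} is established.
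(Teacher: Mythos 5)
Your proposal is correct and is exactly the paper's own derivation: the corollary is obtained by setting $t=q=1$ in Theorem \ref{mainweight}, with the factors $\bigstar_i,\bigstar_i'$ collapsing to $2$, the prefactors to $1$, and the triple product reducing via (\ref{Maceq}) to $\T\left(H_{n_1-m_1,m_2-k+1,m_1+k}\right)$. The paper states this specialization without spelling out the bookkeeping; your write-up just makes those routine steps explicit.
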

Equation (\ref{coroeq}) shows an interesting connecting between the two types of tilings: domino tilings (on the left-hand side) and lozenge tilings  (on the right-hand side). 
We recommend the reader to \cite{Tri2} and \cite{Tri3} for  more results of the same flavor.

The goal of this paper is to prove Theorem \ref{mainweight}. The rest of this paper is organized as follows. In Section 3, we introduce several fundamental results in the subgraph replacement method.
These fundamental results will be employed to prove the key lemma of  the paper (Lemma \ref{weighttransform}). Next, in Section 4, we give an exact formula for a weighted sum of the domino tilings of 
 a double Aztec rectangle (see Theorem \ref{weightdouble}). Finally, in Section 5,  we present a proof of  Theorem \ref{mainweight} by using Theorem \ref{weightdouble}.
 
\section{Subgraph replacements}

In this section, we introduce a powerful method in enumerating of tilings, the subgraph replacement method.

A general lattice divides the plane into disjoint \emph{fundamental regions} (which are unit squares on the square lattice, and are unit equilateral triangles on the triangular lattice). We define a \emph{tile} to be the union of any
two fundamental regions sharing an edge (which is a domino on the square lattice, and a lozenge on the triangular lattice); and the \emph{tiling} of a region in a covering of the region by tiles so that there are no gaps and overlaps as usual.  In this section we allow tiles of a region carry weights. We use the notation $\M(R)$ for the sum of weights of all tilings in $R$,
where the \emph{weight} a tiling is the product of weights of its tiles. We call $\M(R)$ the \emph{tiling generating function} of $R$. In the unweighted case, $\M(R)$ is exactly the number of tilings of $R$.

 A \emph{perfect matching} of a graph $G$ is a collection of disjoint edges covering all vertices of $G$. The tilings
of a region $R$ can be identified with the perfect matchings of its \emph{dual graph} (the graph whose vertices are fundamental regions in $R$ and whose edges connect precisely two fundamental regions sharing an edge).
If $R$ is a weighted
region, we assume that each edge of its dual graph caries the same weight as the corresponding tile. In the view of this, we use notation $\M(G)$
 for the sum of weight of all perfect matchings of the weighted graph $G$, where the weight a perfect matching is the product of weights of its edges. $\M(G)$ is called the \emph{matching generating function} of $G$.

Next, we present several fundamental replacement rules, which allow us replace a subgraph of a graph $G$ by a new subgraph, so that $\M(G)$ does not change or changes in a predictable way.

A \emph{forced edge} of a (weighted) graph $G$ is an edge contained in any perfect matchings of $G$. If we remove several forced edges $e_1,e_2,\dotsc,e_k$ from $G$, then we get a new graph $G'$ and
\begin{equation*}
\M(G)=\M(G')\prod_{i=1}^kwt(e_i),
\end{equation*}
where $wt(e_i)$ is the weight of the edge $e_i$.

\begin{figure}\centering
\includegraphics{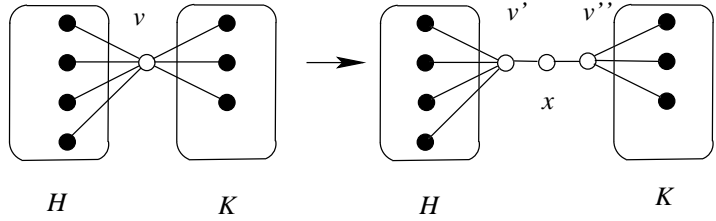}
\caption{Vertex splitting rule}\label{VSfig}
\end{figure}
The following useful lemma is a special case of Lemma 1.3 in \cite{Ciucu1}.
\begin{lem} [Vertex-Splitting Lemma]\label{VS}
 Let $G$ be a graph, $v$ be a vertex of it, and denote the set of neighbors of $v$ by $N(v)$.
  For any disjoint union $N(v)=H\cup K$, let $G'$ be the graph obtained from $G\setminus v$
  by including three new vertices $v'$, $v''$ and $x$ so that $N(v')=H\cup \{x\}$, $N(v'')=K\cup\{x\}$, and $N(x)=\{v',v''\}$ (see Figure \ref{VSfig}). Then $\M(G)=\M(G')$.
\end{lem}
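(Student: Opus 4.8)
The plan is to establish a weight-preserving bijection between the perfect matchings of $G$ and those of $G'$, from which the identity $\M(G)=\M(G')$ follows at once. First I would fix the weight convention implicit in the construction: each new edge $\{v',u\}$ with $u\in H$ inherits the weight of the original edge $\{v,u\}$, each new edge $\{v'',u\}$ with $u\in K$ inherits the weight of $\{v,u\}$, and the two edges $\{v',x\}$ and $\{v'',x\}$ incident to the new vertex $x$ are assigned weight $1$; every edge of $G$ not incident to $v$ keeps its weight in $G'$.

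Next I would analyze the local structure forced at $x$. Since $N(x)=\{v',v''\}$, in any perfect matching $M'$ of $G'$ the vertex $x$ is matched to exactly one of $v',v''$. If $x$ is matched to $v'$, then $v''$, whose only remaining neighbors lie in $K$, must be matched to some $u\in K$; symmetrically, if $x$ is matched to $v''$, then $v'$ must be matched to some $u\in H$. Thus in every matching of $G'$ exactly one of $v',v''$ is matched to a genuine neighbor of $v$, while the other is matched to $x$ through a weight-$1$ edge.

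This dichotomy mirrors the structure of a perfect matching $M$ of $G$, in which $v$ is matched to a unique neighbor $u\in N(v)=H\cup K$. I would define the bijection by cases: if $u\in H$, send $M$ to the matching of $G'$ obtained by replacing $\{v,u\}$ with $\{v',u\}$ and adjoining $\{v'',x\}$; if $u\in K$, replace $\{v,u\}$ with $\{v'',u\}$ and adjoin $\{v',x\}$; all other edges are retained unchanged. The inverse reads off which of $v',v''$ is matched to $x$, deletes that edge together with the single real edge at the other split-vertex, and restores $\{v,u\}$. One then checks routinely that both maps produce genuine perfect matchings (every vertex, including $v',v'',x$, is covered exactly once) and that they are mutually inverse.

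Finally, weight preservation is transparent: the only edges that differ between $M$ and its image are the single matched edge at $v$, whose weight is preserved by the inheritance convention, together with the extra weight-$1$ edge at $x$, which contributes trivially to the product. Summing over all matchings yields $\M(G)=\M(G')$. I expect the only point needing genuine care to be the case analysis at $x$, which forces the partner of the remaining split-vertex to lie in the correct half $H$ or $K$; the remainder is bookkeeping.
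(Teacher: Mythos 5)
Your proof is correct, and it is worth noting that the paper itself contains no proof of this lemma: it is simply cited as a special case of Lemma 1.3 in \cite{Ciucu1}, so your argument serves as a self-contained replacement for that citation. The bijection you construct --- match $x$ to whichever of $v'$, $v''$ is not claimed by the genuine neighbor of $v$, and transfer the matched edge at $v$ to the split vertex on the correct side of the partition $N(v)=H\cup K$ --- is precisely the standard argument underlying the cited lemma, so there is no divergence in substance, only in the fact that you supply the details. One point you handled that deserves emphasis: the lemma as stated never specifies how $G'$ is weighted, so your explicit convention (edges to $H$ and $K$ inherit the weights of the corresponding edges at $v$, and the two edges at $x$ carry weight $1$) is genuinely needed for $\M(G)=\M(G')$ to be a well-posed claim; this convention is the one implicit in Figure \ref{VSfig} and in the way the lemma is invoked in the proof of Lemma \ref{weighttransform}, where the split vertices and the degree-two vertex $x$ are introduced with unit-weight edges before the Spider Lemma is applied. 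With that convention fixed, your case analysis at $x$ (forcing the other split vertex to be matched into its own half $H$ or $K$), the verification that both maps land on perfect matchings, and the observation that weights are preserved edge-by-edge together constitute a complete proof.
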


\begin{lem}[Star Lemma; Lemma 3.2 in \cite{Tri1}]\label{star}
Let $G$ be a weighted graph, and let $v$ be a vertex of~$G$. Let $G'$ be the graph obtained from $G$ by multiplying the weights of all edges incident to $v$ by $t>0$. Then $\M(G')=t\M(G)$.
\end{lem}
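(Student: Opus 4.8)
The plan is to exploit the fact that $G$ and $G'$ share the same underlying (unweighted) graph, so they have exactly the same set of perfect matchings; only the edge weights differ. The strategy is therefore to compare, matching by matching, the weight of a fixed perfect matching $M$ in $G'$ with its weight in $G$, and then sum over all matchings.

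First I would fix a perfect matching $M$ of $G$. Since $M$ covers every vertex exactly once, the vertex $v$ is incident to precisely one edge $e$ of $M$. This edge $e$ is, by definition, one of the edges incident to $v$, so its weight in $G'$ equals $t$ times its weight in $G$. Every other edge of $M$ is vertex-disjoint from $e$ and hence not incident to $v$ (a matching contains at most one edge at each vertex), so its weight is unchanged in passing from $G$ to $G'$. Taking the product of edge-weights over all edges of $M$, exactly one factor is scaled by $t$, which gives $wt_{G'}(M)=t\,wt_{G}(M)$, where $wt_{G}$ and $wt_{G'}$ denote the weight of a matching computed in $G$ and in $G'$, respectively.

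Summing this identity over all perfect matchings $M$ then yields $\M(G')=\sum_{M}wt_{G'}(M)=t\sum_{M}wt_{G}(M)=t\,\M(G)$, as claimed. There is no genuine obstacle in this argument; the only point worth noting is the degenerate case in which $G$ admits no perfect matching (for instance when $v$ is isolated). In that case both $\M(G)$ and $\M(G')$ are empty sums equal to $0$, and the identity $\M(G')=t\cdot 0$ holds trivially.
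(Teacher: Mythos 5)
Your proof is correct and is essentially the standard argument: since every perfect matching uses exactly one edge incident to $v$, each matching's weight picks up exactly one factor of $t$, and summing over matchings gives $\M(G')=t\M(G)$. The paper itself only cites this lemma (Lemma 3.2 in the reference \cite{Tri1}) without reproving it, and the proof given there is the same matching-by-matching scaling argument you used, including the observation that the degenerate case is trivial.
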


The following result is a generalization (due to Propp) of the ``urban renewal" trick first observed by Kuperberg (see \cite[Section 5]{propp}).
\begin{figure}\centering
\begin{picture}(0,0)%
\includegraphics{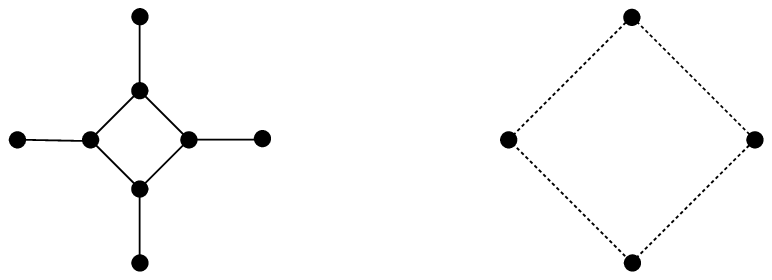}%
\end{picture}%
\setlength{\unitlength}{3947sp}%
\begingroup\makeatletter\ifx\SetFigFont\undefined%
\gdef\SetFigFont#1#2#3#4#5{%
  \reset@font\fontsize{#1}{#2pt}%
  \fontfamily{#3}\fontseries{#4}\fontshape{#5}%
  \selectfont}%
\fi\endgroup%
\begin{picture}(4054,1735)(340,-948)
\put(355,-156){\makebox(0,0)[lb]{\smash{{\SetFigFont{10}{12.0}{\familydefault}{\mddefault}{\updefault}{$A$}%
}}}}
\put(1064,-933){\makebox(0,0)[lb]{\smash{{\SetFigFont{10}{12.0}{\familydefault}{\mddefault}{\updefault}{$B$}%
}}}}
\put(1891,-106){\makebox(0,0)[lb]{\smash{{\SetFigFont{10}{12.0}{\familydefault}{\mddefault}{\updefault}{$C$}%
}}}}
\put(1182,603){\makebox(0,0)[lb]{\smash{{\SetFigFont{10}{12.0}{\familydefault}{\mddefault}{\updefault}{$D$}%
}}}}
\put(2717,-189){\makebox(0,0)[lb]{\smash{{\SetFigFont{10}{12.0}{\familydefault}{\mddefault}{\updefault}{$A$}%
}}}}
\put(3426,-933){\makebox(0,0)[lb]{\smash{{\SetFigFont{10}{12.0}{\familydefault}{\mddefault}{\updefault}{$B$}%
}}}}
\put(4253,-106){\makebox(0,0)[lb]{\smash{{\SetFigFont{10}{12.0}{\familydefault}{\mddefault}{\updefault}{$C$}%
}}}}
\put(3426,603){\makebox(0,0)[lb]{\smash{{\SetFigFont{10}{12.0}{\familydefault}{\mddefault}{\updefault}{$D$}%
}}}}
\put(904,-382){\makebox(0,0)[lb]{\smash{{\SetFigFont{10}{12.0}{\familydefault}{\mddefault}{\updefault}{$x$}%
}}}}
\put(1396,-388){\makebox(0,0)[lb]{\smash{{\SetFigFont{10}{12.0}{\familydefault}{\mddefault}{\updefault}{$y$}%
}}}}
\put(1418,130){\makebox(0,0)[lb]{\smash{{\SetFigFont{10}{12.0}{\familydefault}{\mddefault}{\updefault}{$z$}%
}}}}
\put(946,130){\makebox(0,0)[lb]{\smash{{\SetFigFont{10}{12.0}{\familydefault}{\mddefault}{\updefault}{$t$}%
}}}}
\put(2968,284){\makebox(0,0)[lb]{\smash{{\SetFigFont{10}{12.0}{\familydefault}{\mddefault}{\updefault}{$y/\Delta$}%
}}}}
\put(3934,311){\makebox(0,0)[lb]{\smash{{\SetFigFont{10}{12.0}{\familydefault}{\mddefault}{\updefault}{$x/\Delta$}%
}}}}
\put(3964,-544){\makebox(0,0)[lb]{\smash{{\SetFigFont{10}{12.0}{\familydefault}{\mddefault}{\updefault}{$t/\Delta$}%
}}}}
\put(2965,-526){\makebox(0,0)[lb]{\smash{{\SetFigFont{10}{12.0}{\familydefault}{\mddefault}{\updefault}{$z/\Delta$}%
}}}}
\put(2197,-817){\makebox(0,0)[lb]{\smash{{\SetFigFont{10}{12.0}{\familydefault}{\mddefault}{\updefault}{$\Delta= xz+yt$}%
}}}}
\end{picture}%
\caption{Urban renewal.}
\label{spider1}
\end{figure}

\begin{lem} [Spider Lemma]\label{spider}
 Let $G$ be a weighted graph containing the subgraph $K$ shown on the left in Figure \ref{spider1} (the labels indicate weights, unlabeled edges have weight 1). Suppose that $\Delta=xz+yt\not=0$.
 Suppose in addition that the four inner black vertices in the subgraph $K$, different from $A,B,C,D$, have no neighbors outside $K$. Let $G'$ be the graph obtained from
 $G$ by replacing $K$ by the graph $\overline{K}$ shown on right in Figure \ref{spider1}, where the dashed lines indicate new edges, weighted as shown. Then $\M(G)=(xz+yt)\M(G')$.
\end{lem}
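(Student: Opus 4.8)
The plan is to exploit the locality that is built into the hypotheses: the four inner black vertices of $K$ are matched only to vertices of $K$, so $K$ communicates with the rest of $G$ exclusively through the four boundary vertices $A,B,C,D$. I would first formalize this by conditioning on the \emph{boundary pattern}. Given a perfect matching $M$ of $G$, let $T\subseteq\{A,B,C,D\}$ be the set of boundary vertices that $M$ matches using an edge of $K$; the remaining boundary vertices, together with all vertices outside $K$, are then matched by edges disjoint from the interior of $K$. Writing $K_T$ for the subgraph of $K$ induced on the four inner vertices together with $T$, the weight of $M$ factors as the weight of its restriction to $K_T$ times the weight of its restriction to the exterior. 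Summing over all matchings gives
\[
\M(G)=\sum_{T\subseteq\{A,B,C,D\}}\M(K_T)\,W_{\mathrm{ext}}(T),
\]
where $W_{\mathrm{ext}}(T)$ is the generating function of exterior matchings that cover every outside vertex together with exactly the boundary vertices \emph{not} in $T$. Since $G$ and $G'$ share the same exterior and the same boundary vertices, the quantities $W_{\mathrm{ext}}(T)$ are identical for both, and an analogous identity holds for $\M(G')$ with $\overline K_T$ in place of $K_T$.

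Consequently the lemma reduces to the purely local claim that $\M(K_T)=\Delta\,\M(\overline K_T)$ for every one of the sixteen subsets $T$. I would verify this by direct enumeration. A parity observation disposes of half the cases at once: $K_T$ has four inner vertices plus $|T|$ boundary vertices, so it admits no perfect matching unless $|T|$ is even, whence $\M(K_T)=\M(\overline K_T)=0$ for odd $|T|$. For the even cases the interior matchings are short to list. When $T=\varnothing$ all four inner vertices must be matched among themselves, and the inner $4$-cycle has exactly two perfect matchings, of weights $xz$ and $yt$; their sum is precisely $\Delta=xz+yt$, which is the geometric origin of the global factor. When $|T|=2$ the two active legs are forced, after which a single interior edge completes the matching and one monomial survives; and when $T=\{A,B,C,D\}$ all four legs are forced. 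In each case one reads off $\M(K_T)$ and checks, using the prescribed weights $x/\Delta,\,y/\Delta,\,z/\Delta,\,t/\Delta$ carried by the new edges of $\overline K$, that $\M(\overline K_T)$ equals $\M(K_T)/\Delta$.

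The step I expect to be the main obstacle is exactly this last verification: showing that the factor $\Delta$ comes out \emph{uniformly}, the same for every boundary pattern $T$, rather than with a $T$-dependent constant. This is what forces the particular normalization of the edge weights of $\overline K$ (each divided by $\Delta$) together with the particular incidences of its new edges to $A,B,C,D$; the content of the lemma is that this single choice simultaneously reconciles the case $T=\varnothing$ (where $\Delta$ is produced by the two $4$-cycle matchings) with the fully saturated case (where the factor must instead be absorbed by the reweighted edges). Once the sixteen local identities are in hand, I would substitute them into the factorization above and compare term by term with the analogous expansion of $\M(G')$, obtaining $\M(G)=\Delta\,\M(G')$.

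Finally, I would note an alternative route that stays within the machinery already assembled in this section: realize the passage from $K$ to $\overline K$ as a composition of a vertex split (Lemma~\ref{VS}), several applications of the Star Lemma (Lemma~\ref{star}) to redistribute the weights, and the removal of the forced edges created along the way, the product of whose weights accounts for $\Delta$. This derivation sidesteps the case analysis but requires careful tracking of the intermediate graphs, so I would keep the direct enumeration as the primary argument for its transparency.
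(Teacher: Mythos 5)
The paper does not actually prove this lemma: it is imported verbatim (as Propp's generalization of Kuperberg's ``urban renewal'' trick, citing Section 5 of \cite{propp}), so there is no internal argument to compare yours against. Your proof is correct and is essentially the standard proof from that literature: condition on the boundary pattern $T\subseteq\{A,B,C,D\}$, use the hypothesis that the four inner vertices have no neighbors outside $K$ to factor $\M(G)=\sum_{T}\M(K_T)\,W_{\mathrm{ext}}(T)$, note that $W_{\mathrm{ext}}(T)$ is the same for $G$ and $G'$ because the replacement touches nothing outside $K$, and then verify the sixteen local identities $\M(K_T)=\Delta\,\M(\overline{K}_T)$; the choice of weights $x/\Delta, y/\Delta, z/\Delta, t/\Delta$ on the edges of $\overline{K}$, with each new edge carrying the weight of the \emph{opposite} inner edge of $K$, is exactly what makes the factor $\Delta$ uniform across all patterns, including $T=\varnothing$ (where $xz+yt$ appears) and $T=\{A,B,C,D\}$ (where the two matchings of $\overline{K}$ sum to $(xz+yt)/\Delta^2=1/\Delta$).

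Two small points of care. First, your case $|T|=2$ is stated imprecisely: ``a single interior edge completes the matching'' holds only for the four adjacent pairs; for the diagonal pairs $\{A,C\}$ and $\{B,D\}$ the two surviving inner vertices are opposite corners of the inner $4$-cycle, hence non-adjacent, so $\M(K_T)=0$ — and this is matched on the other side only because $\overline{K}$ is likewise a $4$-cycle with no diagonal edges, giving $0=\Delta\cdot 0$. This case must be listed explicitly, since it is precisely where a wrong choice of incidences for the new edges would break the lemma. Second, the alternative route you sketch (deriving the lemma from Lemma \ref{VS}, Lemma \ref{star}, and removal of forced edges) does not obviously work: those operations never invert the weights of a $4$-cycle or collapse the eight vertices of $K$ to four, and in this paper the logical dependence runs the other way — the Spider Lemma is an independent input used alongside vertex splitting and the Star Lemma to prove Lemma \ref{weighttransform}. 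Keep the direct enumeration as the proof, not merely as the preferred option.
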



\begin{figure}\centering
\resizebox{!}{5.5cm}{
\begin{picture}(0,0)%
\includegraphics{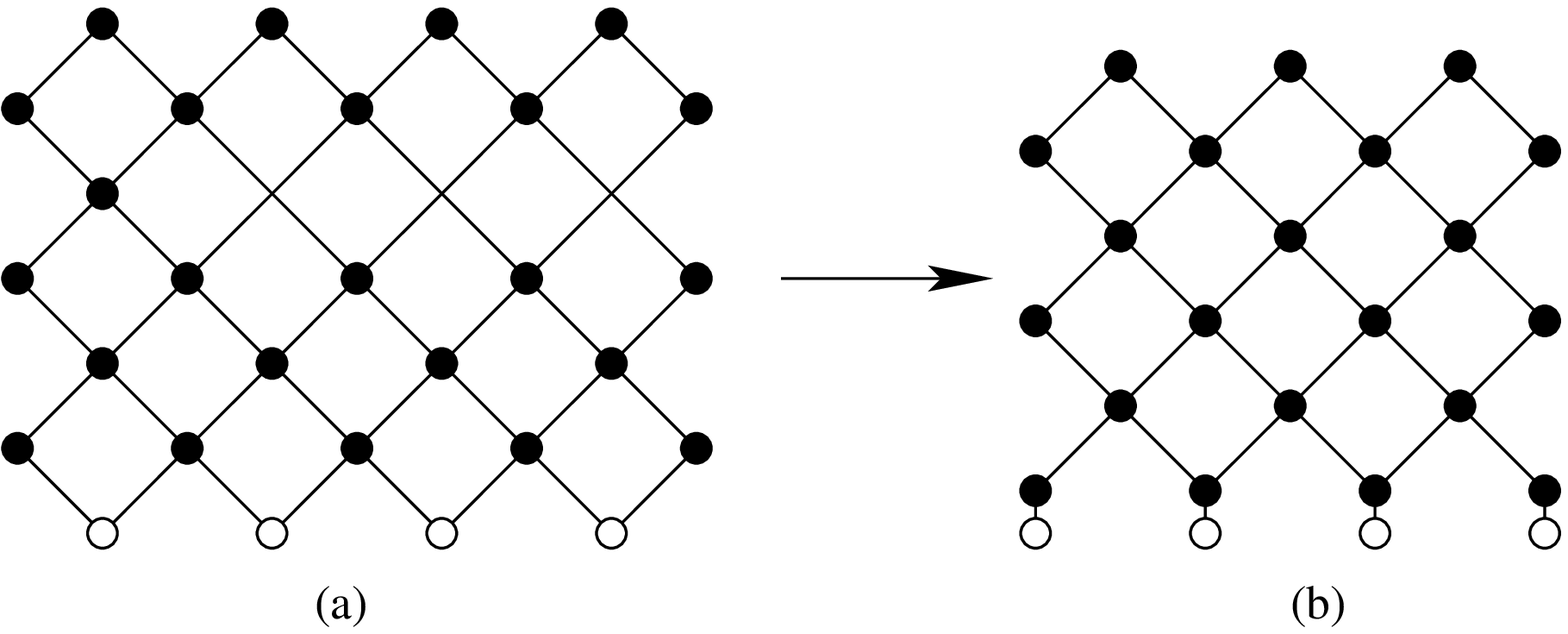}%
\end{picture}%
%
%
\setlength{\unitlength}{3947sp}%
\begingroup\makeatletter\ifx\SetFigFont\undefined%
\gdef\SetFigFont#1#2#3#4#5{%
  \reset@font\fontsize{#1}{#2pt}%
  \fontfamily{#3}\fontseries{#4}\fontshape{#5}%
  \selectfont}%
\fi\endgroup%
\begin{picture}(8716,3496)(1084,-3504)
\put(1300,-2822){\makebox(0,0)[lb]{\smash{{\SetFigFont{10}{12.0}{\rmdefault}{\mddefault}{\updefault}{$b$}%
}}}}
\put(1891,-2822){\makebox(0,0)[lb]{\smash{{\SetFigFont{10}{12.0}{\rmdefault}{\mddefault}{\updefault}{$a$}%
}}}}
\put(2245,-2822){\makebox(0,0)[lb]{\smash{{\SetFigFont{10}{12.0}{\rmdefault}{\mddefault}{\updefault}{$b$}%
}}}}
\put(2836,-2822){\makebox(0,0)[lb]{\smash{{\SetFigFont{10}{12.0}{\rmdefault}{\mddefault}{\updefault}{$a$}%
}}}}
\put(3190,-2822){\makebox(0,0)[lb]{\smash{{\SetFigFont{10}{12.0}{\rmdefault}{\mddefault}{\updefault}{$b$}%
}}}}
\put(3781,-2822){\makebox(0,0)[lb]{\smash{{\SetFigFont{10}{12.0}{\rmdefault}{\mddefault}{\updefault}{$a$}%
}}}}
\put(4135,-2822){\makebox(0,0)[lb]{\smash{{\SetFigFont{10}{12.0}{\rmdefault}{\mddefault}{\updefault}{$b$}%
}}}}
\put(4726,-2822){\makebox(0,0)[lb]{\smash{{\SetFigFont{10}{12.0}{\rmdefault}{\mddefault}{\updefault}{$a$}%
}}}}
\put(1359,-2152){\makebox(0,0)[lb]{\smash{{\SetFigFont{10}{12.0}{\rmdefault}{\mddefault}{\updefault}{$d$}%
}}}}
\put(1832,-2152){\makebox(0,0)[lb]{\smash{{\SetFigFont{10}{12.0}{\rmdefault}{\mddefault}{\updefault}{$c$}%
}}}}
\put(2245,-2152){\makebox(0,0)[lb]{\smash{{\SetFigFont{10}{12.0}{\rmdefault}{\mddefault}{\updefault}{$dq$}%
}}}}
\put(2777,-2152){\makebox(0,0)[lb]{\smash{{\SetFigFont{10}{12.0}{\rmdefault}{\mddefault}{\updefault}{$cq$}%
}}}}
\put(3131,-2152){\makebox(0,0)[lb]{\smash{{\SetFigFont{10}{12.0}{\rmdefault}{\mddefault}{\updefault}{$dq^2$}%
}}}}
\put(3722,-2152){\makebox(0,0)[lb]{\smash{{\SetFigFont{10}{12.0}{\rmdefault}{\mddefault}{\updefault}{$cq^2$}%
}}}}
\put(4076,-2152){\makebox(0,0)[lb]{\smash{{\SetFigFont{10}{12.0}{\rmdefault}{\mddefault}{\updefault}{$dq^3$}%
}}}}
\put(4666,-2152){\makebox(0,0)[lb]{\smash{{\SetFigFont{10}{12.0}{\rmdefault}{\mddefault}{\updefault}{$cq^3$}%
}}}}
\put(1314,-1856){\makebox(0,0)[lb]{\smash{{\SetFigFont{10}{12.0}{\rmdefault}{\mddefault}{\updefault}{$b$}%
}}}}
\put(1905,-1856){\makebox(0,0)[lb]{\smash{{\SetFigFont{10}{12.0}{\rmdefault}{\mddefault}{\updefault}{$a$}%
}}}}
\put(2259,-1856){\makebox(0,0)[lb]{\smash{{\SetFigFont{10}{12.0}{\rmdefault}{\mddefault}{\updefault}{$b$}%
}}}}
\put(2850,-1856){\makebox(0,0)[lb]{\smash{{\SetFigFont{10}{12.0}{\rmdefault}{\mddefault}{\updefault}{$a$}%
}}}}
\put(3204,-1856){\makebox(0,0)[lb]{\smash{{\SetFigFont{10}{12.0}{\rmdefault}{\mddefault}{\updefault}{$b$}%
}}}}
\put(3795,-1856){\makebox(0,0)[lb]{\smash{{\SetFigFont{10}{12.0}{\rmdefault}{\mddefault}{\updefault}{$a$}%
}}}}
\put(4149,-1856){\makebox(0,0)[lb]{\smash{{\SetFigFont{10}{12.0}{\rmdefault}{\mddefault}{\updefault}{$b$}%
}}}}
\put(4740,-1856){\makebox(0,0)[lb]{\smash{{\SetFigFont{10}{12.0}{\rmdefault}{\mddefault}{\updefault}{$a$}%
}}}}
\put(1314,-912){\makebox(0,0)[lb]{\smash{{\SetFigFont{10}{12.0}{\rmdefault}{\mddefault}{\updefault}{$b$}%
}}}}
\put(1905,-912){\makebox(0,0)[lb]{\smash{{\SetFigFont{10}{12.0}{\rmdefault}{\mddefault}{\updefault}{$a$}%
}}}}
\put(2259,-912){\makebox(0,0)[lb]{\smash{{\SetFigFont{10}{12.0}{\rmdefault}{\mddefault}{\updefault}{$b$}%
}}}}
\put(2850,-912){\makebox(0,0)[lb]{\smash{{\SetFigFont{10}{12.0}{\rmdefault}{\mddefault}{\updefault}{$a$}%
}}}}
\put(3204,-912){\makebox(0,0)[lb]{\smash{{\SetFigFont{10}{12.0}{\rmdefault}{\mddefault}{\updefault}{$b$}%
}}}}
\put(3795,-912){\makebox(0,0)[lb]{\smash{{\SetFigFont{10}{12.0}{\rmdefault}{\mddefault}{\updefault}{$a$}%
}}}}
\put(4149,-912){\makebox(0,0)[lb]{\smash{{\SetFigFont{10}{12.0}{\rmdefault}{\mddefault}{\updefault}{$b$}%
}}}}
\put(4740,-912){\makebox(0,0)[lb]{\smash{{\SetFigFont{10}{12.0}{\rmdefault}{\mddefault}{\updefault}{$a$}%
}}}}
\put(1254,-1207){\makebox(0,0)[lb]{\smash{{\SetFigFont{10}{12.0}{\rmdefault}{\mddefault}{\updefault}{$dq$}%
}}}}
\put(1832,-1201){\makebox(0,0)[lb]{\smash{{\SetFigFont{10}{12.0}{\rmdefault}{\mddefault}{\updefault}{$cq$}%
}}}}
\put(2140,-1207){\makebox(0,0)[lb]{\smash{{\SetFigFont{10}{12.0}{\rmdefault}{\mddefault}{\updefault}{$dq^2$}%
}}}}
\put(2777,-1201){\makebox(0,0)[lb]{\smash{{\SetFigFont{10}{12.0}{\rmdefault}{\mddefault}{\updefault}{$cq^2$}%
}}}}
\put(3131,-1228){\makebox(0,0)[lb]{\smash{{\SetFigFont{10}{12.0}{\rmdefault}{\mddefault}{\updefault}{$dq^3$}%
}}}}
\put(3675,-1207){\makebox(0,0)[lb]{\smash{{\SetFigFont{10}{12.0}{\rmdefault}{\mddefault}{\updefault}{$cq^3$}%
}}}}
\put(1196,-262){\makebox(0,0)[lb]{\smash{{\SetFigFont{10}{12.0}{\rmdefault}{\mddefault}{\updefault}{$dq^2$}%
}}}}
\put(1787,-262){\makebox(0,0)[lb]{\smash{{\SetFigFont{10}{12.0}{\rmdefault}{\mddefault}{\updefault}{$cq^2$}%
}}}}
\put(2141,-262){\makebox(0,0)[lb]{\smash{{\SetFigFont{10}{12.0}{\rmdefault}{\mddefault}{\updefault}{$dq^3$}%
}}}}
\put(2777,-256){\makebox(0,0)[lb]{\smash{{\SetFigFont{10}{12.0}{\rmdefault}{\mddefault}{\updefault}{$cq^3$}%
}}}}
\put(4076,-1201){\makebox(0,0)[lb]{\smash{{\SetFigFont{10}{12.0}{\rmdefault}{\mddefault}{\updefault}{$dq^4$}%
}}}}
\put(4725,-1201){\makebox(0,0)[lb]{\smash{{\SetFigFont{10}{12.0}{\rmdefault}{\mddefault}{\updefault}{$cq^4$}%
}}}}
\put(3709,-261){\makebox(0,0)[lb]{\smash{{\SetFigFont{10}{12.0}{\rmdefault}{\mddefault}{\updefault}{$cq^4$}%
}}}}
\put(4076,-256){\makebox(0,0)[lb]{\smash{{\SetFigFont{10}{12.0}{\rmdefault}{\mddefault}{\updefault}{$dq^5$}%
}}}}
\put(4725,-256){\makebox(0,0)[lb]{\smash{{\SetFigFont{10}{12.0}{\rmdefault}{\mddefault}{\updefault}{$cq^5$}%
}}}}
\put(3114,-261){\makebox(0,0)[lb]{\smash{{\SetFigFont{10}{12.0}{\rmdefault}{\mddefault}{\updefault}{$dq^4$}%
}}}}
\put(6970,-2468){\makebox(0,0)[lb]{\smash{{\SetFigFont{10}{12.0}{\rmdefault}{\mddefault}{\updefault}{$d$}%
}}}}
\put(7560,-2441){\makebox(0,0)[lb]{\smash{{\SetFigFont{10}{12.0}{\rmdefault}{\mddefault}{\updefault}{$c$}%
}}}}
\put(7796,-2468){\makebox(0,0)[lb]{\smash{{\SetFigFont{10}{12.0}{\rmdefault}{\mddefault}{\updefault}{$dq$}%
}}}}
\put(8505,-2468){\makebox(0,0)[lb]{\smash{{\SetFigFont{10}{12.0}{\rmdefault}{\mddefault}{\updefault}{$cq$}%
}}}}
\put(8741,-2468){\makebox(0,0)[lb]{\smash{{\SetFigFont{10}{12.0}{\rmdefault}{\mddefault}{\updefault}{$dq^2$}%
}}}}
\put(9509,-2468){\makebox(0,0)[lb]{\smash{{\SetFigFont{10}{12.0}{\rmdefault}{\mddefault}{\updefault}{$cq^2$}%
}}}}
\put(6984,-2092){\makebox(0,0)[lb]{\smash{{\SetFigFont{10}{12.0}{\rmdefault}{\mddefault}{\updefault}{$b$}%
}}}}
\put(7914,-2114){\makebox(0,0)[lb]{\smash{{\SetFigFont{10}{12.0}{\rmdefault}{\mddefault}{\updefault}{$b$}%
}}}}
\put(8874,-2092){\makebox(0,0)[lb]{\smash{{\SetFigFont{10}{12.0}{\rmdefault}{\mddefault}{\updefault}{$b$}%
}}}}
\put(6984,-1206){\makebox(0,0)[lb]{\smash{{\SetFigFont{10}{12.0}{\rmdefault}{\mddefault}{\updefault}{$b$}%
}}}}
\put(7929,-1206){\makebox(0,0)[lb]{\smash{{\SetFigFont{10}{12.0}{\rmdefault}{\mddefault}{\updefault}{$b$}%
}}}}
\put(8874,-1206){\makebox(0,0)[lb]{\smash{{\SetFigFont{10}{12.0}{\rmdefault}{\mddefault}{\updefault}{$b$}%
}}}}
\put(6910,-1496){\makebox(0,0)[lb]{\smash{{\SetFigFont{10}{12.0}{\rmdefault}{\mddefault}{\updefault}{$dq$}%
}}}}
\put(7560,-1496){\makebox(0,0)[lb]{\smash{{\SetFigFont{10}{12.0}{\rmdefault}{\mddefault}{\updefault}{$cq$}%
}}}}
\put(7796,-1496){\makebox(0,0)[lb]{\smash{{\SetFigFont{10}{12.0}{\rmdefault}{\mddefault}{\updefault}{$dq^2$}%
}}}}
\put(8505,-1496){\makebox(0,0)[lb]{\smash{{\SetFigFont{10}{12.0}{\rmdefault}{\mddefault}{\updefault}{$cq^2$}%
}}}}
\put(8800,-1502){\makebox(0,0)[lb]{\smash{{\SetFigFont{10}{12.0}{\rmdefault}{\mddefault}{\updefault}{$dq^3$}%
}}}}
\put(9450,-1496){\makebox(0,0)[lb]{\smash{{\SetFigFont{10}{12.0}{\rmdefault}{\mddefault}{\updefault}{$cq^3$}%
}}}}
\put(6851,-492){\makebox(0,0)[lb]{\smash{{\SetFigFont{10}{12.0}{\rmdefault}{\mddefault}{\updefault}{$dq^2$}%
}}}}
\put(7501,-492){\makebox(0,0)[lb]{\smash{{\SetFigFont{10}{12.0}{\rmdefault}{\mddefault}{\updefault}{$cq^2$}%
}}}}
\put(7855,-492){\makebox(0,0)[lb]{\smash{{\SetFigFont{10}{12.0}{\rmdefault}{\mddefault}{\updefault}{$dq^3$}%
}}}}
\put(8446,-492){\makebox(0,0)[lb]{\smash{{\SetFigFont{10}{12.0}{\rmdefault}{\mddefault}{\updefault}{$cq^3$}%
}}}}
\put(8800,-492){\makebox(0,0)[lb]{\smash{{\SetFigFont{10}{12.0}{\rmdefault}{\mddefault}{\updefault}{$dq^4$}%
}}}}
\put(9450,-557){\makebox(0,0)[lb]{\smash{{\SetFigFont{10}{12.0}{\rmdefault}{\mddefault}{\updefault}{$cq^4$}%
}}}}
\put(7560,-2114){\makebox(0,0)[lb]{\smash{{\SetFigFont{10}{12.0}{\rmdefault}{\mddefault}{\updefault}{$a/q$}%
}}}}
\put(7502,-1208){\makebox(0,0)[lb]{\smash{{\SetFigFont{10}{12.0}{\rmdefault}{\mddefault}{\updefault}{$a/q$}%
}}}}
\put(8447,-1208){\makebox(0,0)[lb]{\smash{{\SetFigFont{10}{12.0}{\rmdefault}{\mddefault}{\updefault}{$a/q$}%
}}}}
\put(9392,-1208){\makebox(0,0)[lb]{\smash{{\SetFigFont{10}{12.0}{\rmdefault}{\mddefault}{\updefault}{$a/q$}%
}}}}
\put(8511,-2092){\makebox(0,0)[lb]{\smash{{\SetFigFont{10}{12.0}{\rmdefault}{\mddefault}{\updefault}{$a/q$}%
}}}}
\put(9456,-2092){\makebox(0,0)[lb]{\smash{{\SetFigFont{10}{12.0}{\rmdefault}{\mddefault}{\updefault}{$a/q$}%
}}}}
\end{picture}}
\caption{The transformation in Lemma  \ref{weighttransform}.}
\label{doubletransform}
\end{figure}

\begin{figure}\centering
\includegraphics[width=10cm]{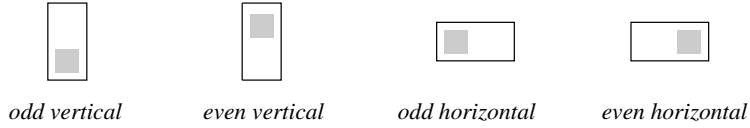}
\caption{Four types of colored dominoes.}
\label{Drawdomino2}
\end{figure}

Next, we color the double Aztec rectangle $\mathcal{DR}^{m_2,n_2}_{m_1,n_1,k}$ by black and white so that two neighbour  unit squares have opposite colors and that the unit squares along the southwest side of $\mathcal{AR}_{m_1,n_1}$ are white (see Figure \ref{double7} for an example). We assign weights to the (colored) dominoes of a double Aztec rectangle as follows. Assume that $a,b,c,d,q$ are five positive numbers.
We assign to each \emph{odd horizontal domino} (see Figure \ref{Drawdomino2} for four types of dominoes) a weight $b$, each \emph{even vertical domino} a weight $a$,
 each \emph{even horizontal domino} on the level $k$ a weight $cq^{k-1}$ (the bottom of the region is on the level $0$), and each \emph{odd vertical domino} on the level $k$ a weight $dq^k$.
 Denote by $wt:=wt_{a,b}^{c,d}(q)$ the resulting weight assignment.

If an Aztec rectangle $\mathcal{AR}_{m,n}$ is assigned the weights as above, then we denote by \\ $AR_{m,n}=AR_{m,n}(a,b,c,d,q)$ its dual graph rotated $45^{\circ}$ clockwise
 (see Figure \ref{doubletransform}(a) for an example).

The \textit{connected sum} $G\#G'$ of two disjoint graphs $G$ and $G'$ along the ordered sets of vertices $\{v_1,\dotsc,v_n\}\subset V(G)$ and $\{v'_1,\dotsc,v'_n\}\subset V(G')$ is the graph obtained from $G$ and $G'$ by identifying vertices $v_i$ and $v'_i$, for $i=1,\dotsc,n$.

Using the above fundamental subgraph replacement rules, we get the following new replacement rule, which will be employed in our proof in the next section.

\begin{lem}\label{weighttransform}
Let $G$ be a weighted graph, and $\{v_1,v_2,\dots,v_n\}$ an ordered set of its vertices. Then
\begin{equation*}
\M\left(G\#AR_{m,n}(a,b,c,d,q)\right)=(ad+bc)^{m}q^{m(n-1)+\binom{m}{2}}\M\left(G\#{}_{|}AR_{m-\frac{1}{2},n-1}(a/q,b,c,d)\right),
\end{equation*}
where ${}_{|}AR_{m-\frac{1}{2},n-1}(a/q,b,c,d,q)$ is obtained from the graph $AR_{m,n-1}(a/q,b,cq,dq,q)$ by removing the bottommost vertices,
and appending a vertical edge to each of the bottommost vertices of the resulting graph; and where the connected sum acts on $G$ along the ordered set $\{v_1,v_2,\dotsc,v_n\}$,
and on the other summands 
along their bottommost vertices ordered from left to right (see Figure \ref{doubletransform}).
\end{lem}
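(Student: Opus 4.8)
The plan is to prove Lemma~\ref{weighttransform} by applying the three fundamental replacement rules of this section — the Spider Lemma (urban renewal, Lemma~\ref{spider}), the Star Lemma (Lemma~\ref{star}), and the forced-edge rule — to the summand $AR_{m,n}(a,b,c,d,q)$ only, leaving $G$ entirely untouched. The point that makes $G$ irrelevant is that all the moves are local to $AR_{m,n}$, and the vertices $v_1,\dots,v_n$ at which the connected sum is formed never appear as \emph{inner} vertices of any spider cell; they play only the role of the outer vertices $A,B,C,D$ in Figure~\ref{spider1}, which are explicitly permitted to have neighbors outside the replaced block. Hence it suffices to track the rescaling of $AR_{m,n}$ and its bottom boundary, and the stated identity will follow for the connected sum verbatim.

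First I would identify the $m$ four-cycles (``cells'') along the bottom row of $AR_{m,n}$ on which urban renewal is to be performed. Under the weight assignment $wt_{a,b}^{c,d}(q)$, each such cell carries the two pairs of opposite edge-weights whose cross-products are $ad$ and $bc$, so that $\Delta=ad+bc\neq 0$ for every cell. Applying Lemma~\ref{spider} to these $m$ cells one at a time produces the scalar prefactor $(ad+bc)^{m}$ and replaces each cell by its urban-renewal dual, whose new edge-weights are the old ones divided by $\Delta$. After this step several of the newly created vertices have degree one or two, so their incident edges are forced; removing these forced edges by the forced-edge rule is exactly what deletes the bottommost vertices of the reduced graph and appends a pendant vertical edge to each new bottom vertex, realizing the geometric operation in the definition of ${}_{|}AR_{m-\frac12,n-1}$, while contributing further weight factors.

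It remains to rescale the surviving edges to the target weights. Here I would invoke Lemma~\ref{star} at the interior vertices both to clear the $1/\Delta$ denominators left by urban renewal and to absorb the level shift: because the $q$-grading is by level (an even horizontal domino at level $k$ carries $cq^{k-1}$ and an odd vertical one $dq^{k}$), pushing the picture up by one level after the reduction is precisely what turns $(c,d)$ into $(cq,dq)$ and rescales the $a$-edges to $a/q$, matching $wt_{a/q,b}^{cq,dq}(q)$ on the trimmed graph. The cells sit at consecutive levels, so the powers of $q$ harvested from the Star-Lemma applications are $q^{0},q^{1},\dots,q^{m-1}$ together with a fixed $q^{n-1}$ per cell; summing these is the source of the exponent $m(n-1)+\binom{m}{2}$.

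The main obstacle is the weight bookkeeping: one must verify that the combined contributions of the $m$ Spider-Lemma factors $(ad+bc)$, the Star-Lemma rescalings, and the removed forced edges collapse to exactly $(ad+bc)^{m}q^{\,m(n-1)+\binom{m}{2}}$, with the residual weight pattern matching $wt_{a/q,b}^{cq,dq}(q)$ and the residual boundary matching the pendant-edge structure of ${}_{|}AR_{m-\frac12,n-1}$. The delicate point is that a single misattributed power of $q$ per level accumulates across all $m$ levels, so the cleanest way to control it is an induction on $m$: peel off the lowest cell, reduce to the $m=1$ case computed directly, and apply the induction hypothesis to the remaining graph, whose weights have been shifted by one level precisely as the recursion requires.
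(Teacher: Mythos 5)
Your toolkit is the right one---this is indeed proved by Spider, Star, and forced-edge moves applied locally to the $AR_{m,n}$ summand, with $G$ untouched and the $v_i$'s serving only as outer vertices---but the execution plan has three concrete gaps. First, the Spider Lemma \ref{spider} cannot be applied to the cells of $AR_{m,n}$ as they stand: its hypothesis requires the four inner vertices of the cell to have no neighbors outside it, whereas in $AR_{m,n}$ adjacent cells share vertices. The paper therefore begins by applying the Vertex-Splitting Lemma \ref{VS} to \emph{every} vertex of $AR_{m,n}$, so that the graph decomposes into $mn$ pairwise disjoint $4$-cycles; this preparatory step is absent from your plan. Second, and more seriously, urban renewal must then be performed at \emph{all} $mn$ cells, not at ``the $m$ cells along the bottom row'' (note the bottom row contains $n$ cells, not $m$, so the count is off in any case): renewing a single line of cells leaves the rest of the graph unreduced and produces nothing isomorphic to ${}_{|}AR_{m-\frac{1}{2},n-1}$. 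The passage from width $n$ to width $n-1$ is a global shuffle of the whole rectangle, exactly as in domino-shuffling proofs of the Aztec diamond theorem. Third, because the weights are $q$-graded by level, the cell in row $i$ and column $j$ has spider factor $\Delta q^{i+j-2}$ with $\Delta=ad+bc$; your claim that every cell contributes exactly $ad+bc$ holds only for the bottom-left cell.

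Consequently the prefactor does not arise as ``$m$ spider factors $(ad+bc)$ times $q^{n-1}$ and $q^{0},\dots,q^{m-1}$ per cell.'' In the paper's accounting, the $mn$ spider moves contribute $\prod_{1\leq i\leq m}\prod_{1\leq j\leq n}\Delta q^{i+j-2}$, and then Star Lemma rescalings with factor $\Delta q^{i+j-2}$ at the central vertex of each of the $m(n-1)$ renewed cells (used to clear the $1/\Delta$-weights and restore the pattern $wt_{a/q,b}^{cq,dq}(q)$) contribute the inverse product $\prod_{1\leq i\leq m}\prod_{1\leq j\leq n-1}\bigl(\Delta q^{i+j-2}\bigr)^{-1}$; the quotient of these two near-identical products is precisely $(ad+bc)^{m}q^{m(n-1)+\binom{m}{2}}$. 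An induction on $m$ that ``peels off the lowest cell'' cannot repair this, because a single row (or single cell) of spider moves is not a valid intermediate transformation of the graph: the correct one-step reduction is the simultaneous renewal of all $mn$ cells, after vertex splitting, followed by the global Star-Lemma cleanup.
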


\begin{figure}\centering
\setlength{\unitlength}{3947sp}%
\begingroup\makeatletter\ifx\SetFigFont\undefined%
\gdef\SetFigFont#1#2#3#4#5{%
  \reset@font\fontsize{#1}{#2pt}%
  \fontfamily{#3}\fontseries{#4}\fontshape{#5}%
  \selectfont}%
\fi\endgroup%
\resizebox{14cm}{!}{
\begin{picture}(0,0)%
\includegraphics{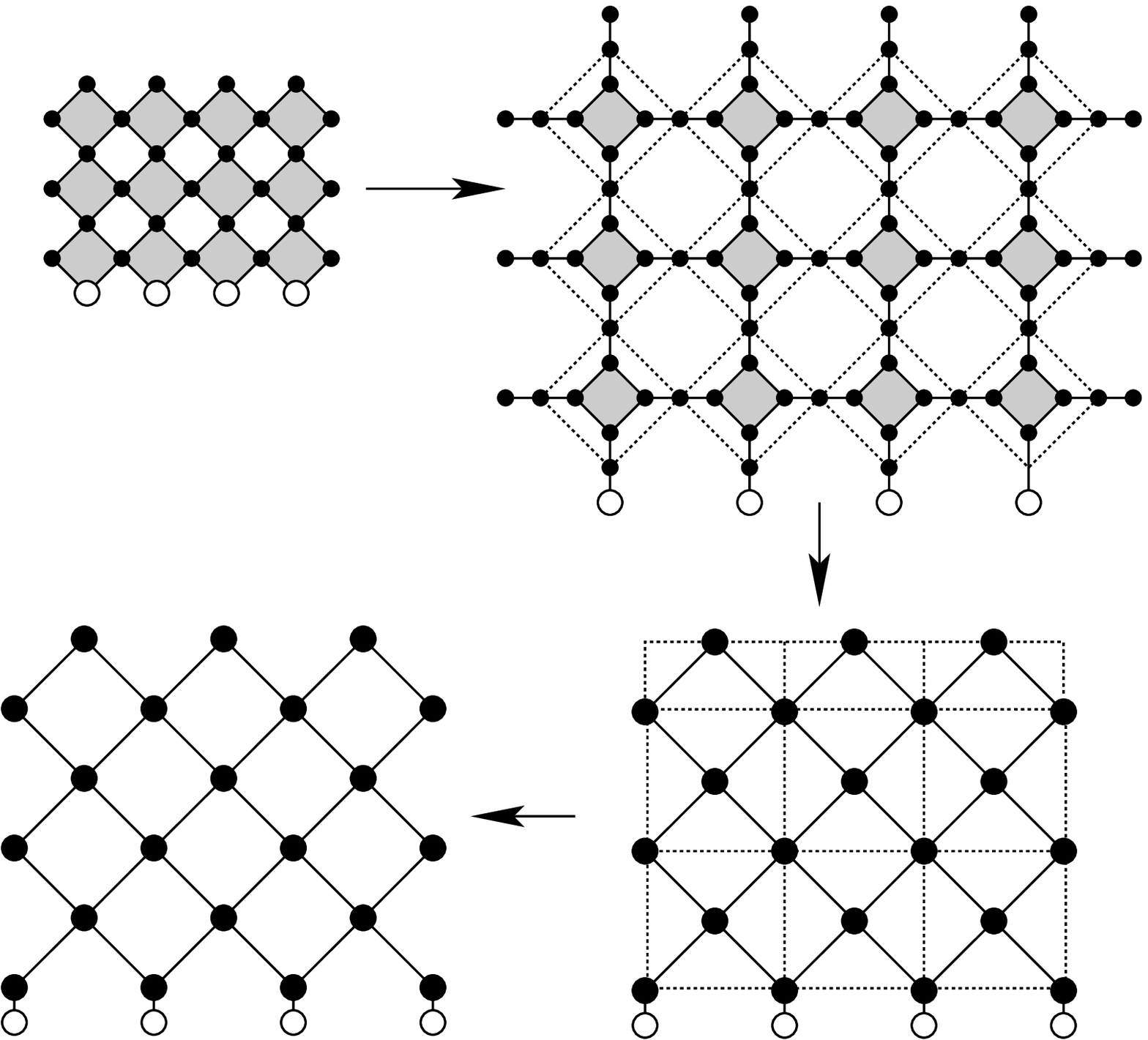}%
\end{picture}%
%
%

\begin{picture}(8450,7014)(592,-6583)
\put(809,-5991){\makebox(0,0)[lb]{\smash{{\SetFigFont{10}{12.0}{\rmdefault}{\mddefault}{\updefault}{$d$}%
}}}}
\put(1399,-5964){\makebox(0,0)[lb]{\smash{{\SetFigFont{10}{12.0}{\rmdefault}{\mddefault}{\updefault}{$c$}%
}}}}
\put(1635,-5991){\makebox(0,0)[lb]{\smash{{\SetFigFont{10}{12.0}{\rmdefault}{\mddefault}{\updefault}{$dq$}%
}}}}
\put(2344,-5991){\makebox(0,0)[lb]{\smash{{\SetFigFont{10}{12.0}{\rmdefault}{\mddefault}{\updefault}{$cq$}%
}}}}
\put(2580,-5991){\makebox(0,0)[lb]{\smash{{\SetFigFont{10}{12.0}{\rmdefault}{\mddefault}{\updefault}{$dq^2$}%
}}}}
\put(3348,-5991){\makebox(0,0)[lb]{\smash{{\SetFigFont{10}{12.0}{\rmdefault}{\mddefault}{\updefault}{$cq^2$}%
}}}}
\put(823,-5615){\makebox(0,0)[lb]{\smash{{\SetFigFont{10}{12.0}{\rmdefault}{\mddefault}{\updefault}{$b$}%
}}}}
\put(1753,-5637){\makebox(0,0)[lb]{\smash{{\SetFigFont{10}{12.0}{\rmdefault}{\mddefault}{\updefault}{$b$}%
}}}}
\put(2713,-5615){\makebox(0,0)[lb]{\smash{{\SetFigFont{10}{12.0}{\rmdefault}{\mddefault}{\updefault}{$b$}%
}}}}
\put(823,-4729){\makebox(0,0)[lb]{\smash{{\SetFigFont{10}{12.0}{\rmdefault}{\mddefault}{\updefault}{$b$}%
}}}}
\put(1768,-4729){\makebox(0,0)[lb]{\smash{{\SetFigFont{10}{12.0}{\rmdefault}{\mddefault}{\updefault}{$b$}%
}}}}
\put(2713,-4729){\makebox(0,0)[lb]{\smash{{\SetFigFont{10}{12.0}{\rmdefault}{\mddefault}{\updefault}{$b$}%
}}}}
\put(749,-5019){\makebox(0,0)[lb]{\smash{{\SetFigFont{10}{12.0}{\rmdefault}{\mddefault}{\updefault}{$dq$}%
}}}}
\put(1399,-5019){\makebox(0,0)[lb]{\smash{{\SetFigFont{10}{12.0}{\rmdefault}{\mddefault}{\updefault}{$cq$}%
}}}}
\put(1635,-5019){\makebox(0,0)[lb]{\smash{{\SetFigFont{10}{12.0}{\rmdefault}{\mddefault}{\updefault}{$dq^2$}%
}}}}
\put(2344,-5019){\makebox(0,0)[lb]{\smash{{\SetFigFont{10}{12.0}{\rmdefault}{\mddefault}{\updefault}{$cq^2$}%
}}}}
\put(2639,-5025){\makebox(0,0)[lb]{\smash{{\SetFigFont{10}{12.0}{\rmdefault}{\mddefault}{\updefault}{$dq^3$}%
}}}}
\put(3289,-5019){\makebox(0,0)[lb]{\smash{{\SetFigFont{10}{12.0}{\rmdefault}{\mddefault}{\updefault}{$cq^3$}%
}}}}
\put(690,-4015){\makebox(0,0)[lb]{\smash{{\SetFigFont{10}{12.0}{\rmdefault}{\mddefault}{\updefault}{$dq^2$}%
}}}}
\put(1340,-4015){\makebox(0,0)[lb]{\smash{{\SetFigFont{10}{12.0}{\rmdefault}{\mddefault}{\updefault}{$cq^2$}%
}}}}
\put(1694,-4015){\makebox(0,0)[lb]{\smash{{\SetFigFont{10}{12.0}{\rmdefault}{\mddefault}{\updefault}{$dq^3$}%
}}}}
\put(2285,-4015){\makebox(0,0)[lb]{\smash{{\SetFigFont{10}{12.0}{\rmdefault}{\mddefault}{\updefault}{$cq^3$}%
}}}}
\put(2639,-4015){\makebox(0,0)[lb]{\smash{{\SetFigFont{10}{12.0}{\rmdefault}{\mddefault}{\updefault}{$dq^4$}%
}}}}
\put(3289,-4080){\makebox(0,0)[lb]{\smash{{\SetFigFont{10}{12.0}{\rmdefault}{\mddefault}{\updefault}{$cq^4$}%
}}}}
\put(1399,-5637){\makebox(0,0)[lb]{\smash{{\SetFigFont{10}{12.0}{\rmdefault}{\mddefault}{\updefault}{$\frac{a}{q}$}%
}}}}
\put(1341,-4731){\makebox(0,0)[lb]{\smash{{\SetFigFont{10}{12.0}{\rmdefault}{\mddefault}{\updefault}{$\frac{a}{q}$}%
}}}}
\put(2286,-4731){\makebox(0,0)[lb]{\smash{{\SetFigFont{10}{12.0}{\rmdefault}{\mddefault}{\updefault}{$\frac{a}{q}$}%
}}}}
\put(3231,-4731){\makebox(0,0)[lb]{\smash{{\SetFigFont{10}{12.0}{\rmdefault}{\mddefault}{\updefault}{$\frac{a}{q}$}%
}}}}
\put(2350,-5615){\makebox(0,0)[lb]{\smash{{\SetFigFont{10}{12.0}{\rmdefault}{\mddefault}{\updefault}{$\frac{a}{q}$}%
}}}}
\put(3295,-5615){\makebox(0,0)[lb]{\smash{{\SetFigFont{10}{12.0}{\rmdefault}{\mddefault}{\updefault}{$\frac{a}{q}$}%
}}}}
\put(5198,-6161){\makebox(0,0)[lb]{\smash{{\SetFigFont{10}{12.0}{\rmdefault}{\mddefault}{\updefault}{$\frac{d}{\Delta}$}%
}}}}
\put(5552,-6161){\makebox(0,0)[lb]{\smash{{\SetFigFont{10}{12.0}{\rmdefault}{\mddefault}{\updefault}{$\frac{c}{\Delta}$}
}}}}
\put(6497,-6161){\makebox(0,0)[lb]{\smash{{\SetFigFont{10}{12.0}{\rmdefault}{\mddefault}{\updefault}{$\frac{c}{\Delta}$}%
}}}}
\put(7442,-6161){\makebox(0,0)[lb]{\smash{{\SetFigFont{10}{12.0}{\rmdefault}{\mddefault}{\updefault}{$\frac{c}{\Delta}$}%
}}}}
\put(5080,-5657){\makebox(0,0)[lb]{\smash{{\SetFigFont{10}{12.0}{\rmdefault}{\mddefault}{\updefault}{$\frac{b}{\Delta}$}%
}}}}
\put(6025,-5657){\makebox(0,0)[lb]{\smash{{\SetFigFont{10}{12.0}{\rmdefault}{\mddefault}{\updefault}{$\frac{b}{\Delta q}$}%
}}}}
\put(5434,-5539){\makebox(0,0)[lb]{\smash{{\SetFigFont{10}{12.0}{\rmdefault}{\mddefault}{\updefault}{$\frac{a}{\Delta q}$}%
}}}}
\put(6134,-6166){\makebox(0,0)[lb]{\smash{{\SetFigFont{10}{12.0}{\rmdefault}{\mddefault}{\updefault}{$\frac{d}{\Delta}$}%
}}}}
\put(7079,-6166){\makebox(0,0)[lb]{\smash{{\SetFigFont{10}{12.0}{\rmdefault}{\mddefault}{\updefault}{$\frac{d}{\Delta}$}%
}}}}
\put(6400,-5420){\makebox(0,0)[lb]{\smash{{\SetFigFont{10}{12.0}{\rmdefault}{\mddefault}{\updefault}{$\frac{a}{\Delta q^2}$}%
}}}}
\put(6850,-5657){\makebox(0,0)[lb]{\smash{{\SetFigFont{10}{12.0}{\rmdefault}{\mddefault}{\updefault}{$\frac{b}{\Delta q^2}$}%
}}}}
\put(7383,-5421){\makebox(0,0)[lb]{\smash{{\SetFigFont{10}{12.0}{\rmdefault}{\mddefault}{\updefault}{$\frac{a}{\Delta q^3}$}%
}}}}
\put(5192,-5161){\makebox(0,0)[lb]{\smash{{\SetFigFont{10}{12.0}{\rmdefault}{\mddefault}{\updefault}{$\frac{d}{\Delta}$}%
}}}}
\put(5546,-5161){\makebox(0,0)[lb]{\smash{{\SetFigFont{10}{12.0}{\rmdefault}{\mddefault}{\updefault}{$\frac{c}{\Delta}$}%
}}}}
\put(6491,-5161){\makebox(0,0)[lb]{\smash{{\SetFigFont{10}{12.0}{\rmdefault}{\mddefault}{\updefault}{$\frac{c}{\Delta}$}%
}}}}
\put(7436,-5161){\makebox(0,0)[lb]{\smash{{\SetFigFont{10}{12.0}{\rmdefault}{\mddefault}{\updefault}{$\frac{c}{\Delta}$}%
}}}}
\put(6128,-5166){\makebox(0,0)[lb]{\smash{{\SetFigFont{10}{12.0}{\rmdefault}{\mddefault}{\updefault}{$\frac{d}{\Delta}$}%
}}}}
\put(7073,-5166){\makebox(0,0)[lb]{\smash{{\SetFigFont{10}{12.0}{\rmdefault}{\mddefault}{\updefault}{$\frac{d}{\Delta}$}%
}}}}
\put(5050,-4749){\makebox(0,0)[lb]{\smash{{\SetFigFont{10}{12.0}{\rmdefault}{\mddefault}{\updefault}{$\frac{b}{\Delta q}$}%
}}}}
\put(5490,-4454){\makebox(0,0)[lb]{\smash{{\SetFigFont{10}{12.0}{\rmdefault}{\mddefault}{\updefault}{$\frac{a}{\Delta q^2}$}%
}}}}
\put(5900,-4690){\makebox(0,0)[lb]{\smash{{\SetFigFont{10}{12.0}{\rmdefault}{\mddefault}{\updefault}{$\frac{b}{\Delta q^2}$}%
}}}}
\put(6457,-4454){\makebox(0,0)[lb]{\smash{{\SetFigFont{10}{12.0}{\rmdefault}{\mddefault}{\updefault}{$\frac{a}{\Delta q^3}$}%
}}}}
\put(5192,-4216){\makebox(0,0)[lb]{\smash{{\SetFigFont{10}{12.0}{\rmdefault}{\mddefault}{\updefault}{$\frac{d}{\Delta}$}%
}}}}
\put(5546,-4216){\makebox(0,0)[lb]{\smash{{\SetFigFont{10}{12.0}{\rmdefault}{\mddefault}{\updefault}{$\frac{c}{\Delta}$}%
}}}}
\put(6491,-4216){\makebox(0,0)[lb]{\smash{{\SetFigFont{10}{12.0}{\rmdefault}{\mddefault}{\updefault}{$\frac{c}{\Delta}$}%
}}}}
\put(7436,-4216){\makebox(0,0)[lb]{\smash{{\SetFigFont{10}{12.0}{\rmdefault}{\mddefault}{\updefault}{$\frac{c}{\Delta}$}%
}}}}
\put(6128,-4221){\makebox(0,0)[lb]{\smash{{\SetFigFont{10}{12.0}{\rmdefault}{\mddefault}{\updefault}{$\frac{d}{\Delta}$}%
}}}}
\put(7073,-4221){\makebox(0,0)[lb]{\smash{{\SetFigFont{10}{12.0}{\rmdefault}{\mddefault}{\updefault}{$\frac{d}{\Delta}$}%
}}}}
\put(6900,-4712){\makebox(0,0)[lb]{\smash{{\SetFigFont{10}{12.0}{\rmdefault}{\mddefault}{\updefault}{$\frac{b}{\Delta q^3}$}%
}}}}
\put(7300,-4476){\makebox(0,0)[lb]{\smash{{\SetFigFont{10}{12.0}{\rmdefault}{\mddefault}{\updefault}{$\frac{a}{\Delta q^4}$}%
}}}}
\end{picture}}

\caption{Illustration of the proof of Lemma \ref{weighttransform}, where $\Delta:=ad+bc$.}
\label{doubletransform2}
\end{figure}
\begin{proof}
The proof is illustrated in the Figure \ref{doubletransform2}, for $m=3$ and $n=4$. First, we apply Vertex-splitting Lemma \ref{VS} to all vertices of
$AR_{m,n}(a,b,c,d,q)$ as in Figures \ref{doubletransform2}(a) and (b). Apply the Spider Lemma  \ref{spider} around $mn$ shaded cells, and remove all edges
 incident to a vertex of degree 1, which is forced edges (see Figure \ref{doubletransform2}(b)). This way, $G\# AR_{m,n}(a,b,c,d,q)$ is transformed into
 $G\# {}_{|}\overline{AR}_{m-\frac{1}{2},n-1}$, where ${}_{|}\overline{AR}_{m-\frac{1}{2},n-1}$ is the weighted version of the graph ${}_{|}AR_{m-\frac{1}{2},n-1}$
 illustrated in Figure \ref{doubletransform2}(c). Next, we divide the graph ${}_{|}\overline{AR}_{m-\frac{1}{2},n-1}$, except for its vertical edges, into $m(n-1)$ subgraphs
 restricted  by the dotted squares as in Figure \ref{doubletransform2}(c). Apply Star Lemma \ref{star} with factor $t=\Delta q^{i+j-2}$ to the central vertex of the  dotted square in row $i$
  (from bottom to top) and column $j$ (from left to right). Finally, we get the graph $G\# {}_{|}AR_{m-\frac{1}{2},n-1}(a/q,b,c,d)$. By Spider, Graph Splitting, and Star Lemmas, we get
\begin{align*}
\M\left(G\#AR_{m,n}(a,b,c,d,q)\right)&=\prod_{1\leq i\leq m}\prod_{1\leq j\leq n}\Delta q^{{i+j-2}}\cdot \M\left(G\# {}_{|}\overline{AR}_{m-\frac{1}{2},n-1}\right)\\
&=\prod_{1\leq i\leq m}\prod_{1\leq j\leq n}\Delta q^{{i+j-2}}  \prod_{1\leq i\leq m}\prod_{1\leq j\leq n-1}(\Delta q^{{i+j-2}})^{-1}\\
&\quad \times \M\left(G\#{}_{|}AR_{m-\frac{1}{2},n-1}(a/q,b,c,d)\right),
\end{align*}
and the lemma follows.
\end{proof}

\section{Weighted Double Aztec rectangles}

Consider the double Aztec rectangle $\mathcal{DR}_{m_1,n_1,k}^{m_2,n_2}$, where its dominoes are weighted by the weight assignment $wt:=wt_{a,b}^{c,d}(q)$ (defined in the previous section).
To specify the weight assignment, we denote by $\mathcal{DR}_{m_1,n_1,k}^{m_2,n_2}(a,b,c,d,q)$ the  weighted double Aztec rectangle. The  tiling generating function of this weighted region is given by the theorem stated below.

\begin{thm}\label{weightdouble}
Assume that $a,b,c,d,q$ are positive numbers. Assume in addition that $m_1,m_2,n_1,n_2,k$ are positive integers so that
 $m_1\leq n_1$, $m_2\leq n_2$, $k\leq \min (m_2,n_2-1)$, and $n_1-m_1=n_2-m_2$. Then
\begin{align*}
\M\Big(\mathcal{DR}_{m_1,n_1,k}^{m_2,n_2}&(a,b,c,d,q)\Big)=c^{(m_2-k+1)(n_1-m_1)}d^{(m_1+k)(n_1-m_1)}\notag\\
&\times \prod_{i=0}^{m_1-1} (\Delta'_i)^{m_1-i}\prod_{i=0}^{m_2-1}(\Delta_i)^{m_2-i} q^{N/2} \prod_{i=1}^{n_1-m_1}\prod_{j=1}^{m_2-k+1}\prod_{t=1}^{m_1+k}\frac{1-q^{i+j+t-1}}{1-q^{i+j+t-2}},
\end{align*}
where $N$ is defined as in Theorem \ref{mainweight},
and where $\Delta'_i=q^{k+m_2+i}(adq^{-i}+bcq)$ and\\ $\Delta_i=q^{m_2+n_2-2-i}(adq^{i}+bc)$.
\end{thm}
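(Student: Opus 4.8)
The plan is to strip the two Aztec-rectangle summands of the double Aztec rectangle one layer at a time using the reduction rule of Lemma \ref{weighttransform}, to collect the multiplicative prefactors that each application produces, and then to identify the fully reduced graph with the dual graph of a weighted hexagon whose matching generating function is MacMahon's $q$-enumeration (\ref{qMac}). Concretely, I would first realize the dual graph of $\mathcal{DR}_{m_1,n_1,k}^{m_2,n_2}(a,b,c,d,q)$, rotated $45^{\circ}$, as a connected sum $G_0\#AR_{m_2,n_2}(a,b,c,d,q)$ in which $G_0$ is the upper part together with the interface; the irregular boundary of the region forces some dominoes, and deleting the corresponding forced edges (via the forced-edge rule) puts the graph into the shape to which Lemma \ref{weighttransform} applies.

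Next I would peel the lower Aztec rectangle $\mathcal{AR}_{m_2,n_2}$ by applying Lemma \ref{weighttransform} a total of $m_2$ times, each time treating everything other than the rectangle currently being consumed as the graph $G$ of the lemma. At the $(i+1)$-st application ($i=0,\dots,m_2-1$) the surviving rectangle has height $m_2-i$, and the weights have been rescaled by the substitutions $a\mapsto a/q$, $c\mapsto cq$, $d\mapsto dq$ that the lemma carries along; the prefactor produced is a power of the binomial $ad\,q^{\bullet}+bc$ of exponent $m_2-i$ times a pure power of $q$. I would record the binomial part, with a convenient $q$-normalization, as $(\Delta_i)^{m_2-i}$, where $\Delta_i=q^{m_2+n_2-2-i}(adq^{i}+bc)$, and set the leftover powers of $q$ aside for later collection. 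Symmetrically, I would then peel the upper part $\mathcal{AR}_{m_1,n_1}$ by $m_1$ further applications of Lemma \ref{weighttransform}; because the upper rectangle meets the interface with the offset $k$ and is consumed from the opposite side, the same bookkeeping yields the conjugate factors $(\Delta'_i)^{m_1-i}$ with $\Delta'_i=q^{k+m_2+i}(adq^{-i}+bcq)$.

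After both rectangles have been exhausted, the remaining graph is the dual graph of the ``middle'' region, which is a hexagon on the triangular lattice of side lengths $n_1-m_1,\ m_2-k+1,\ m_1+k$ (here the hypothesis $n_1-m_1=n_2-m_2$ is what makes the two peeled parts fit together into a genuine hexagon). A last batch of forced/fixed-weight edges in this graph contributes the factor $c^{(m_2-k+1)(n_1-m_1)}d^{(m_1+k)(n_1-m_1)}$, whose exponents are exactly the products of the hexagon's side lengths. The $q$-weighted perfect matchings of the hexagon correspond to lozenge tilings counted by $q$ to the volume of the associated plane partition, so MacMahon's formula (\ref{qMac}) evaluates this matching generating function as the triple product $\prod_{i=1}^{n_1-m_1}\prod_{j=1}^{m_2-k+1}\prod_{t=1}^{m_1+k}\frac{1-q^{i+j+t-1}}{1-q^{i+j+t-2}}$. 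Multiplying the prefactors gathered during the two peeling phases by this triple product gives the asserted formula for $\M\big(\mathcal{DR}_{m_1,n_1,k}^{m_2,n_2}(a,b,c,d,q)\big)$.

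The hard part will be the $q$-exponent bookkeeping. The main obstacle I expect is to verify that all the stray powers of $q$ — namely the $q^{(m_2-i)(\cdot)+\binom{m_2-i}{2}}$ and $q^{(m_1-i)(\cdot)+\binom{m_1-i}{2}}$ contributions from every application of Lemma \ref{weighttransform} for both parts, together with the $q$-powers not absorbed into the chosen normalizations of $\Delta_i$ and $\Delta'_i$, and those needed to normalize the weighted hexagon so that it matches (\ref{qMac}) exactly — collapse precisely to the single exponent $N/2$, with $N$ the degree-three polynomial in $m_1,m_2,n_1,n_2,k$ of Theorem \ref{mainweight}. This amounts to summing sequences of binomial terms $\binom{m-i}{2}$ and of terms linear in $i$ over both peeling ranges; it is finite but delicate, and keeping the interface offset $k$ and the shrinking of the top and bottom rows correctly aligned throughout the two peeling phases is where the care is genuinely required.
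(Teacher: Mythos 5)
Your peeling phase is exactly the paper's argument: the paper likewise splits the dual graph into the two Aztec-rectangle parts plus an interface, applies Lemma \ref{weighttransform} repeatedly (viewing the surviving $m-i$ rows after each application as a smaller Aztec rectangle with the rescaled weights $a\mapsto a/q$, $c\mapsto cq$, $d\mapsto dq$), and collects precisely the factors $(\Delta'_i)^{m_1-i}$ and $(\Delta_i)^{m_2-i}$ together with stray powers of $q$; the hypothesis $n_1-m_1=n_2-m_2$ is indeed what makes the two reduced halves close up into the dual graph of the hexagon $H_{n_1-m_1,\,m_2-k+1,\,m_1+k}$. (The paper peels the top rectangle first and the bottom one second, recording the bottom part with parameter $q^{-1}$ and shifted $c,d$ because its levels run the other way; that is the bookkeeping you flagged, and it is routine.)

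The genuine gap is in your last step. The hexagon left after peeling is \emph{not} uniformly weighted, and its matching generating function is not ``directly'' MacMahon's product: each left lozenge carries a weight of the form $cq^{m_2+i}$ and each right lozenge a weight $dq^{\bullet}$, where the exponent depends on the lozenge's position in the hexagon. These lozenges are not forced edges --- their locations vary from tiling to tiling --- so attributing the factor $c^{(m_2-k+1)(n_1-m_1)}d^{(m_1+k)(n_1-m_1)}$ to ``forced/fixed-weight edges'' is incorrect, and the identification of the remaining generating function with $\prod_{i}\prod_{j}\prod_{t}\frac{1-q^{i+j+t-1}}{1-q^{i+j+t-2}}$ is exactly what still has to be proved. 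The paper does this with a separate idea: decompose each lozenge tiling of the hexagon into $n_1-m_1$ non-intersecting lozenge paths $P_1,\dotsc,P_{n_1-m_1}$; observe that along $P_i$ the $j$-th right lozenge from the bottom always has weight $dq^{m_2+i+j-1}$ \emph{independently of the tiling}, so all right lozenges can be re-weighted to $1$ at the cost of an overall constant; divide the left-lozenge weights on $P_i$ by $cq^{m_2+i-1}$, after which the residual power of $q$ in a tiling's weight is the number of unit cubes in the \emph{complement} $\pi^c$ of the associated plane partition $\pi$; and finally use that $\sum_{\pi}q^{|\pi^c|}=\sum_{\pi}q^{|\pi|}$ (complementation is a bijection of the set of plane partitions in the box onto itself) before invoking (\ref{qMac}). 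Without this normalization argument the reduction from the weighted hexagon to MacMahon's formula does not go through; the real remaining content of the theorem lives there, not only in the $q$-exponent bookkeeping you identified as the hard part.
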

We notice that the $a=b=c=d=q=1$ specialization of Theorem \ref{weightdouble} also  implies Corollary \ref{coroweight} in Section 2.
\begin{proof}
Consider the dual graph $G$ of $\mathcal{DR}_{m_1,n_1,k}^{m_2,n_2}(a,b,c,d,q)$.
 Divide the graph $G$ into three parts by two dotted horizontal lines as in Figure \ref{doubletransform3}(a).

 \begin{figure}\centering
\includegraphics[width=14cm]{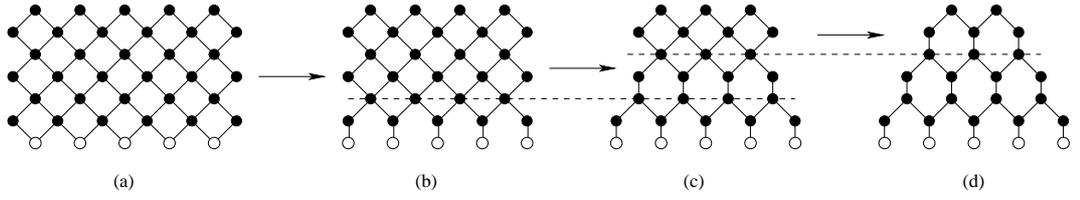}
\caption{Applying the replacement in Lemma \ref{weighttransform} repeatedly to transform the upper part of $G$ into the dual graph of the upper half of weighted hexagon.}
\label{Newholey12}
\end{figure}

\begin{figure}\centering
\includegraphics[width=13cm]{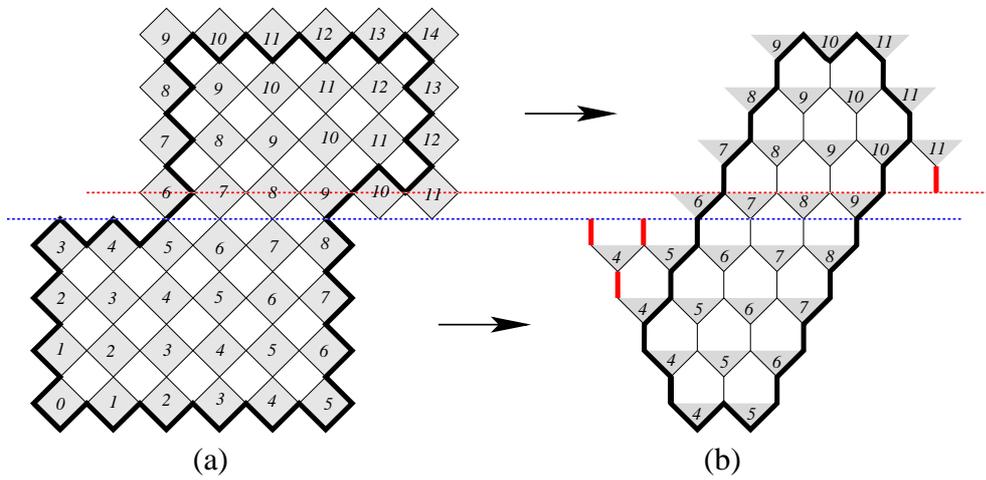}%
\caption{Illustration of the proof of Theorem \ref{mainweight}. The shaded diamond with label $x$ have edge-weight $a,b,dq^x,cq^x$ (in cyclic order, start from the northwest side); and the baseless triangle with label $y$ have the left and right edge-weights $cq^y$ and $dq^y$, respectively.}
\label{doubletransform3}
\end{figure}

Apply the replacement in Lemma \ref{weighttransform} to the top part of $G$, which is isomorphic to  $AR_{m_1,n_1}(a,b,cq^{k+m_2+1},dq^{k+m_2},q)$.
In particular, we replace this portion of $G$ by the graph ${}_{|}AR_{m_1-\frac{1}{2},n_1-1}(a/q,b,cq^{k+m_2+1},dq^{k+m_2},q)$. Viewing $m_1-1$ rows of diamond in the latter graph as the weighted Aztec rectangle graph \\ $AR_{m_1-1,n_1-1}(a/q,b,cq^{k+m_2+2},dq^{k+m_2+1},q)$, we can apply the replacement in Lemma \ref{weighttransform} again.
Keep applying this process as in the Figure \ref{Newholey12} (where the weights are not shown). This way we transform the upper part into a dual graph of the upper half of a weighted hexagon (see the replacement above the dotted lines in Figure \ref{doubletransform3} for the case when $m_1=3,$ $n_1=5$, $m_2=4$,$n_2=6$, $k=3$; the dual graph $G$ is illustrated by the graph restricted by the bold contour in Figure \ref{doubletransform3}(a)).

\begin{figure}\centering
\includegraphics[width=7cm]{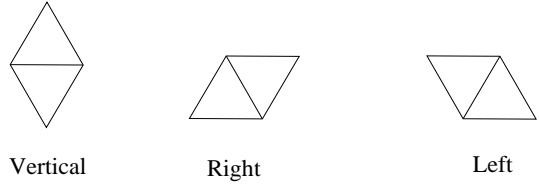}
\caption{Three types of lozenges.}
\label{rhumbustype}
\end{figure}

Similarly, we transform the bottom part of $G$, which is isomorphic to the weighted Aztec rectangle graph
$AR_{m_2,n_2}(a,b,cq^{m_2+n_2-2},dq^{m_2+n_2-2},q^{-1})$ rotated  by $180^{\circ}$, into the dual graph
of the lower half of some weighted hexagon (see the replacement below the dotted lines in Figure \ref{doubletransform3}. Finally, we remove forced vertical edges in the resulting graph.

 This way, the dual graph $G$ of the region is transformed into the dual graph of a weighted lozenge hexagon $\mathcal{H}:=H_{n_1-m_1,m_2-k+1,m_1+k}$
  (see the graph restricted by the bold contour in Figure \ref{doubletransform3}(b)). In particular, the lozenges in $\mathcal{H}$ are weighted as follows. All vertical lozenges are weighted by $1$.
A left lozenge is weighted by $cq^{m_2+i}$, where the Euclidian distance between the left side of the lozenge and the southwest side of the hexagon is $i\sqrt{3}/2$.
Finally,   a right lozenge has weight $cq^{m_2+i}$, where the distance from the lower-left vertex of the lozenge to the southwest side of the hexagon in $i\sqrt{3}/2$.
Figure \ref{volumepartition}(c) shows a weight assignment for the lozenges in a sample tiling of $\mathcal{H}$  for the case $k=2$, $m_1=3$, $m_2=4$, $n_1=7$, $n_2=8$, in which each left lozenge
 with label $x$ is weighted by $cq^x$; and each right lozenge with label $y$ is weighted by $dq^y$, and all vertical lozenges are weighted by $1$.
By the above replacement process and Lemma \ref{weighttransform}, we have
\begin{align}\label{weq1}
\M(G)=&\prod_{i=0}^{m_1-1}(\Delta'_i)^{m_1-i}q^{(m_1-i)(n_1-i-1)+\binom{m_1-i}{2}} \notag\\
&\times \prod_{i=0}^{m_2-1}(\Delta_i)^{m_2-i}q^{-(m_2-i)(n_2-i-1)-\binom{m_2-i}{2}}  \cdot \M(\mathcal{H}),
\end{align}
where $\Delta'_i=q^{k+m_2+i}(adq^{-i}+bcq)$ and $\Delta_i=q^{m_2+n_2-2-i}(adq^{i}+bc)$.

\begin{figure}\centering
\setlength{\unitlength}{3947sp}%
\begingroup\makeatletter\ifx\SetFigFont\undefined%
\gdef\SetFigFont#1#2#3#4#5{%
  \reset@font\fontsize{#1}{#2pt}%
  \fontfamily{#3}\fontseries{#4}\fontshape{#5}%
  \selectfont}%
\fi\endgroup%
\resizebox{13cm}{!}{
\begin{picture}(0,0)%
\includegraphics{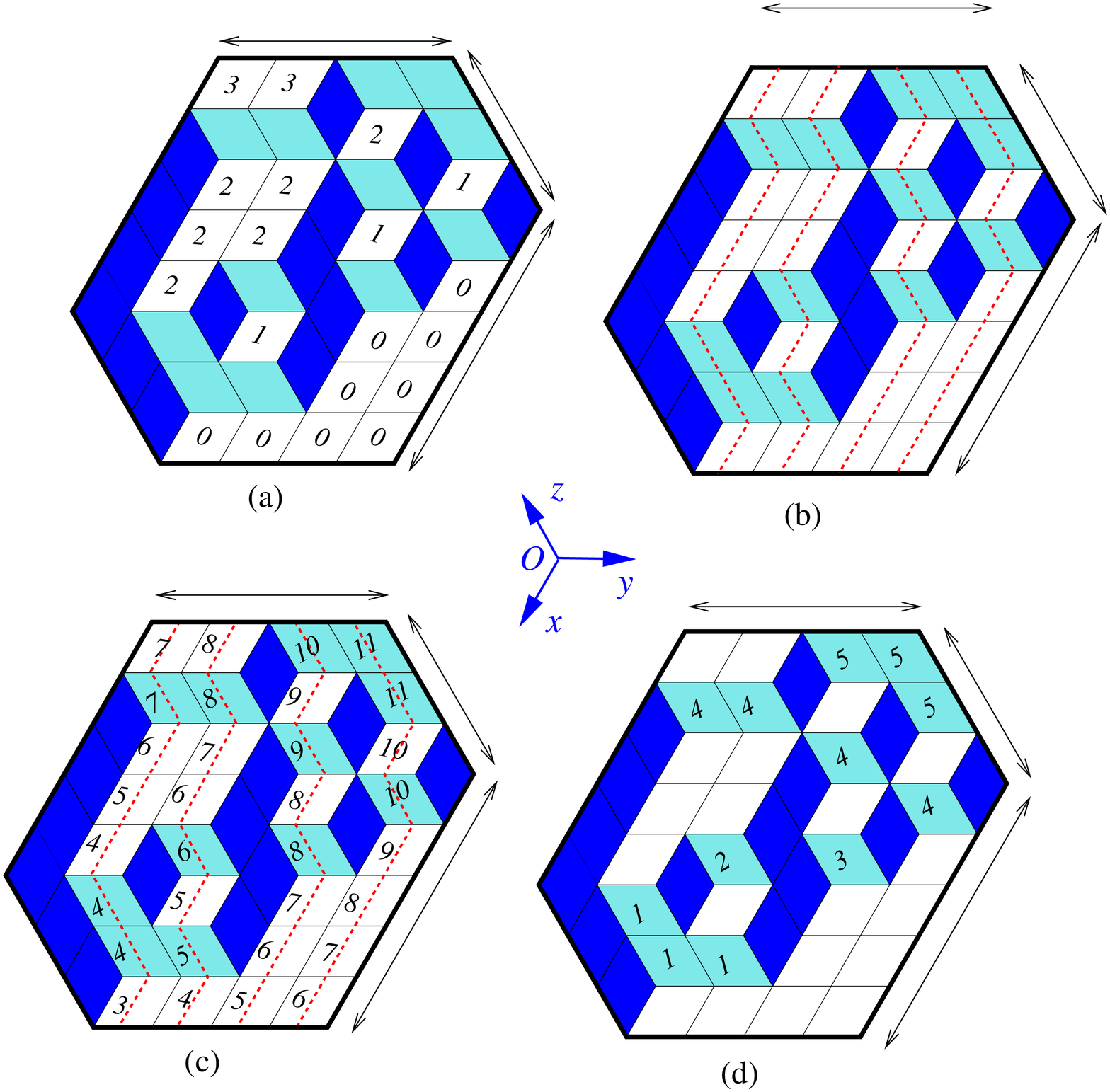}%
\end{picture}%
%
%

\begin{picture}(14750,14940)(328,-14354)
\put(11101,291){\makebox(0,0)[lb]{\smash{{\SetFigFont{20}{24.0}{\rmdefault}{\mddefault}{\itdefault}{$n_1-m_1=4$}%
}}}}
\put(4006,-121){\makebox(0,0)[lb]{\smash{{\SetFigFont{20}{24.0}{\rmdefault}{\mddefault}{\itdefault}{$n_1-m_1=4$}%
}}}}
\put(6886,-481){\rotatebox{300.0}{\makebox(0,0)[lb]{\smash{{\SetFigFont{20}{24.0}{\rmdefault}{\mddefault}{\itdefault}{$n_2-k+1=3$}%
}}}}}
\put(6631,-5401){\rotatebox{60.0}{\makebox(0,0)[lb]{\smash{{\SetFigFont{20}{24.0}{\rmdefault}{\mddefault}{\itdefault}{$m_1+k=5$}%
}}}}}
\put(10531,-511){\makebox(0,0)[lb]{\smash{{\SetFigFont{20}{24.0}{\rmdefault}{\mddefault}{\itdefault}{$P_1$}%
}}}}
\put(11356,-511){\makebox(0,0)[lb]{\smash{{\SetFigFont{20}{24.0}{\rmdefault}{\mddefault}{\itdefault}{$P_2$}%
}}}}
\put(12886,-541){\makebox(0,0)[lb]{\smash{{\SetFigFont{20}{24.0}{\rmdefault}{\mddefault}{\itdefault}{$P_3$}%
}}}}
\put(12121,-511){\makebox(0,0)[lb]{\smash{{\SetFigFont{20}{24.0}{\rmdefault}{\mddefault}{\itdefault}{$P_4$}%
}}}}
\put(2956,-7494){\makebox(0,0)[lb]{\smash{{\SetFigFont{20}{24.0}{\rmdefault}{\mddefault}{\itdefault}{$n_1-m_1=4$}%
}}}}
\put(10066,-7704){\makebox(0,0)[lb]{\smash{{\SetFigFont{20}{24.0}{\rmdefault}{\mddefault}{\itdefault}{$n_1-m_1=4$}%
}}}}
\put(6078,-8044){\rotatebox{300.0}{\makebox(0,0)[lb]{\smash{{\SetFigFont{20}{24.0}{\rmdefault}{\mddefault}{\itdefault}{$n_2-k+1=3$}%
}}}}}
\put(13143,-8194){\rotatebox{300.0}{\makebox(0,0)[lb]{\smash{{\SetFigFont{20}{24.0}{\rmdefault}{\mddefault}{\itdefault}{$n_2-k+1=3$}%
}}}}}
\put(14163,-574){\rotatebox{300.0}{\makebox(0,0)[lb]{\smash{{\SetFigFont{20}{24.0}{\rmdefault}{\mddefault}{\itdefault}{$n_2-k+1=3$}%
}}}}}
\put(14017,-5408){\rotatebox{60.0}{\makebox(0,0)[lb]{\smash{{\SetFigFont{20}{24.0}{\rmdefault}{\mddefault}{\itdefault}{$m_1+k=5$}%
}}}}}
\end{picture}}
\caption{ (a) Each lozenge tiling $T$ of the lozenge hexagon $\mathcal{H}$
 corresponds to stack of unit cubes fitting in a $(n_1-m_1)\times (m_2-k+1) \times (m_1+k)$ box. A white lozenge indicates the
  upper face of a column of  unit cubes, and its label shows the number of unit cubes in the column. (b) Each lozenge in a tiling $T$ of $\mathcal{H}$ corresponds to a family of
  $n_1-m_1$ distinct lozenge-paths $P_i$'s. (c) The weight assignment of lozenges in the tiling $T$  (d) The weights of  lozenges in the tiling $T$
after  the weight changing (the lozenges with no label are weighted by $1$).}
\label{volumepartition}
\end{figure}

Now, each lozenge tiling $T$ of the hexagon $\mathcal{H}$ corresponds to family of $n_1-m_1$ disjoint lozenge-paths consisting of \textit{left}
 and \textit{right lozenge} $\textbf{P}=(P_1,P_2,\dotsc,P_{n-m})$ (see Figure \ref{rhumbustype} for three orientations of a lozenge), where $P_i$ starts from the lozenges containing
 the $i$-th triangle on the north side and ends at the lozenges containing the $i$-th triangle on the south side (see Figure \ref{volumepartition}(b)).

We notice that all lozenges, which are not on the paths $P_i$'s, are vertical and have weight $1$. Thus, the weight of the tiling $T$ equals $wt(P)=\prod_{i=1}^{n_1-m_1}wt(P_i)$, where $wt(P_i)$
is the product of the weights of lozenges on the path $P_i$.

 Each path $P_i$ has $m_2-k$ left lozenges and $m_1+k$ right lozenges. Moreover, if one moves from the bottom of the path $P_i$, the $j$-th right lozenge has weight
  $dq^{m_2+i+j-1}$ (i.e. has label $m_2+i+j-1$  Figure \ref{volumepartition}(c)). Next, we re-assign to each right lozenge a weight $1$, and divide weights of left lozenges on $P_i$ by $cq^{m_2+i-1}$. We get a new weight
   assignment $wt'$ and
\begin{equation*}
wt(P_i)=wt'(P_i)c^{m_2-k}q^{(m_2+i-1)(m_2-k)}d^{m_1+k}\prod_{j=1}^{m_1+k}q^{m_1+m_2+k+i-j},
\end{equation*}
where $wt'(P_i)$ is the new weight of $P_i$. Multiplying the above equations, for $i=1,2,\dotsc,n_1-m_1$, we obtain
\begin{align}\label{weq2}
wt(\textbf{P})=&wt'(\textbf{P})c^{(n_1-m_1)(m_2-k)}q^{(n_1-m_1)m_2(m_2-k)+(m_2-k)\binom{n_1-m_1}{2}}d^{(m_1+k)(n_1-m_1)}\notag\\
&\times\prod_{i=1}^{n_1-m_1}\prod_{j=1}^{m_1+k}q^{m_1+m_2+k+i-j}.
\end{align}

We can see that in the weight assignment $wt'$, all right and vertical lozenges are weighted by $1$; and a left lozenge is weighted by $i$, where $i\sqrt{3}/2$ is the distance between the left side of the lozenge and the southwest side of the hexagon (see Figure \ref{volumepartition}(d) in which each left lozenge with label $x$ is weighted by $q^x$).

Since only left lozenges have weights different from $1$ in the new weight assignment $wt'$,
$wt'(\textbf{P})=q^{s}$, where $s$ is the sum of all labels of left lozenges as in Figure \ref{volumepartition}(d).

Each lozenge tiling $T$ of the lozenge hexagon $\mathcal{H}$
 corresponds to plane partition $\pi$ (a stack of unit cubes) fitting in a $(n_1-m_1)\times (m_2-k+1) \times (m_1+k)$ box (see Figure \ref{volumepartition}(a)). However, one readily sees that the sum of labels $s$ gives us the number of unit cubes in the complement $\pi^c$ of the plane partition $\pi$.
Taking sum over all tilings $T$, by (\ref{weq2}), we get
\begin{align*}\label{weq4}
\sum_{T}wt(T)&=\sum_{\textbf{P}}wt(\textbf{P})=K\sum_{\textbf{P}}wt'(\textbf{P})\\
&=K\prod_{\pi}q^{|\pi^c|}=K\prod_{\pi}q^{|\pi|},
\end{align*}
where the products are taken over all plane partitions $\pi$ fitting in a $(n_1-m_1)\times(m_2-k+1)\times (m_1+k)$ box, and where
\[K=c^{(m_2-k+1)(n_1-m_1)}d^{(m_1+k)(n_1-m_1)}q^{\frac{(n_1-m_1)}{2}(2m_2^2+m_2m_1+m_2n_1+k^2+2km_1+m_1n_1+ k-m_2)}.\]
By MacMahon's theorem (\ref{qMac}), we have
\begin{equation*}\label{weq5}
\sum_{T}wt(T)=K\prod_{i=1}^{n_1-m_1}\prod_{j=1}^{m_2-k+1}\prod_{t=1}^{m_1+k}\frac{[i+j+t-1]_q}{[i+j+t-2]_q}.
\end{equation*}
Then the theorem follows from (\ref{weq1}).
\end{proof}

\section{Proof of Theorem \ref{mainweight}}

Similar to the case of lozenge tilings of a hexagon, the domino tilings of the double Aztec rectangle $\mathcal{DR}^{m_2mn_2}_{m_1,n_1,k}$ correspond to families of non-intersecting paths as follows.

Label the centers of vertical steps on the lower south-western boundary of the region by $u_1,u_2,\dots,u_{m_2}$, and the centers of vertical steps on the upper north-western boundary by
$u_{m_2+1},$ $u_{m_2+2},$ $\dotsc,u_{m_2+n_1}$. Next, we label the centers of vertical steps on the lower south-eastern boundary by $v_1,v_2,\dotsc,v_{n_2}$, and the centers of
 vertical steps on the upper north-eastern boundary by $v_{n_2+1},v_{n_2+2},\dotsc,v_{n_2+m_1}$ (see Figure \ref{double6} for the case when $m_1=4$, $n_1=8$, $m_2=3$, $n_2=7$, $k=2$).

\begin{figure}\centering
\includegraphics[width=10cm]{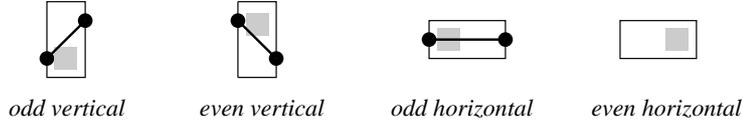}
\caption{Decorating the dominoes by steps of Schr\"{o}der paths.}
\label{Drawdomino}
\end{figure}

We color the region black and white so that two squares sharing an edge have opposite colors and that the unit squares along the southwest side $\mathcal{AR}_{m_1,n_1}$ are white.
Given a tiling $T$ of the double Aztec rectangle,  we decorate its dominoes as in Figure \ref{Drawdomino}. Then the tiling $T$ corresponds to a family of non-intersecting
 paths $\textbf{P}=(P_1,P_2,\dotsc,P_{m_1+n_2})$, where $P_i$ connects $u_i$ and $v_i$ (see Figure \ref{double7}).

We notice that the path $P_i$ is a part of a \emph{Schr\"{o}der path} (a lattice path on the square lattice starting and finishing on the $x$-axis, and using only $(1,1)$, $(1,-1)$ and
 $(0,2)$ steps so that it  never go below $x$-axis). We also remark that the above correspondence between domino tilings and families of non-intersecting Sch\"{o}der paths is inspired by Eu and Fu's correspondence for the case of the Aztec diamonds \cite{Eu}.
\begin{figure}\centering
\resizebox{!}{7cm}{
\begin{picture}(0,0)%
\includegraphics{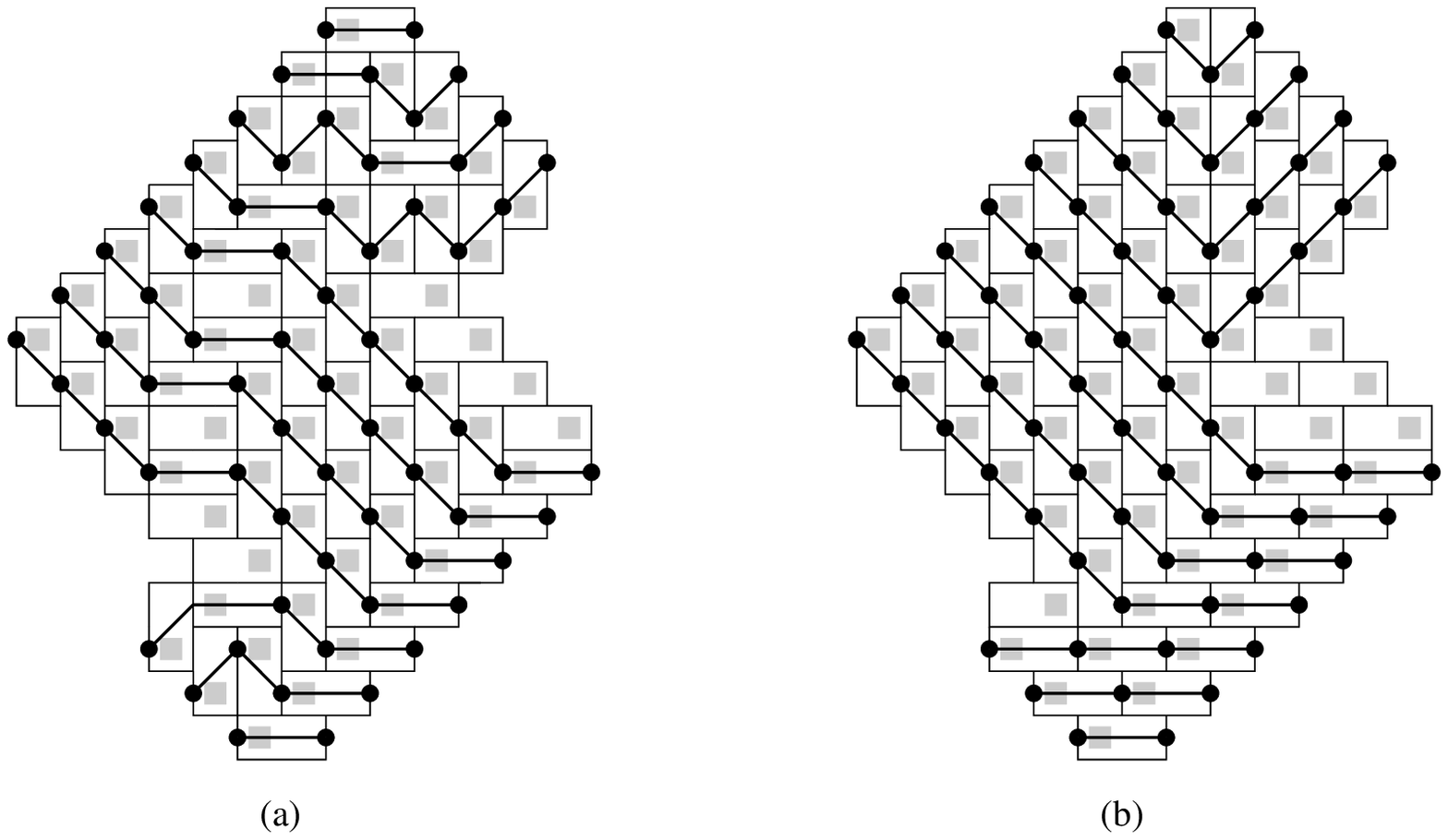}%
\end{picture}%
\setlength{\unitlength}{3947sp}%
\begingroup\makeatletter\ifx\SetFigFont\undefined%
\gdef\SetFigFont#1#2#3#4#5{%
  \reset@font\fontsize{#1}{#2pt}%
  \fontfamily{#3}\fontseries{#4}\fontshape{#5}%
  \selectfont}%
\fi\endgroup%
\begin{picture}(7983,4543)(1285,-3842)
\put(2481,-3413){\makebox(0,0)[lb]{\smash{{\SetFigFont{12}{14.4}{\rmdefault}{\mddefault}{\updefault}{$u_1$}%
}}}}
\put(2245,-3177){\makebox(0,0)[lb]{\smash{{\SetFigFont{12}{14.4}{\rmdefault}{\mddefault}{\updefault}{$u_2$}%
}}}}
\put(2009,-2941){\makebox(0,0)[lb]{\smash{{\SetFigFont{12}{14.4}{\rmdefault}{\mddefault}{\updefault}{$u_3$}%
}}}}
\put(1300,-1287){\makebox(0,0)[lb]{\smash{{\SetFigFont{12}{14.4}{\rmdefault}{\mddefault}{\updefault}{$u_4$}%
}}}}
\put(1537,-933){\makebox(0,0)[lb]{\smash{{\SetFigFont{12}{14.4}{\rmdefault}{\mddefault}{\updefault}{$u_5$}%
}}}}
\put(1773,-696){\makebox(0,0)[lb]{\smash{{\SetFigFont{12}{14.4}{\rmdefault}{\mddefault}{\updefault}{$u_6$}%
}}}}
\put(2009,-460){\makebox(0,0)[lb]{\smash{{\SetFigFont{12}{14.4}{\rmdefault}{\mddefault}{\updefault}{$u_7$}%
}}}}
\put(2245,-224){\makebox(0,0)[lb]{\smash{{\SetFigFont{12}{14.4}{\rmdefault}{\mddefault}{\updefault}{$u_8$}%
}}}}
\put(2482, 12){\makebox(0,0)[lb]{\smash{{\SetFigFont{12}{14.4}{\rmdefault}{\mddefault}{\updefault}{$u_{9}$}%
}}}}
\put(2718,249){\makebox(0,0)[lb]{\smash{{\SetFigFont{12}{14.4}{\rmdefault}{\mddefault}{\updefault}{$u_{10}$}%
}}}}
\put(2954,485){\makebox(0,0)[lb]{\smash{{\SetFigFont{12}{14.4}{\rmdefault}{\mddefault}{\updefault}{$u_{11}$}%
}}}}
\put(3899,485){\makebox(0,0)[lb]{\smash{{\SetFigFont{12}{14.4}{\rmdefault}{\mddefault}{\updefault}{$v_{11}$}%
}}}}
\put(4135,248){\makebox(0,0)[lb]{\smash{{\SetFigFont{12}{14.4}{\rmdefault}{\mddefault}{\updefault}{$v_{10}$}%
}}}}
\put(4371, 12){\makebox(0,0)[lb]{\smash{{\SetFigFont{12}{14.4}{\rmdefault}{\mddefault}{\updefault}{$v_9$}%
}}}}
\put(4607,-224){\makebox(0,0)[lb]{\smash{{\SetFigFont{12}{14.4}{\rmdefault}{\mddefault}{\updefault}{$v_8$}%
}}}}
\put(4844,-1996){\makebox(0,0)[lb]{\smash{{\SetFigFont{12}{14.4}{\rmdefault}{\mddefault}{\updefault}{$v_7$}%
}}}}
\put(4607,-2232){\makebox(0,0)[lb]{\smash{{\SetFigFont{12}{14.4}{\rmdefault}{\mddefault}{\updefault}{$v_6$}%
}}}}
\put(4371,-2468){\makebox(0,0)[lb]{\smash{{\SetFigFont{12}{14.4}{\rmdefault}{\mddefault}{\updefault}{$v_5$}%
}}}}
\put(4135,-2704){\makebox(0,0)[lb]{\smash{{\SetFigFont{12}{14.4}{\rmdefault}{\mddefault}{\updefault}{$v_4$}%
}}}}
\put(3899,-2941){\makebox(0,0)[lb]{\smash{{\SetFigFont{12}{14.4}{\rmdefault}{\mddefault}{\updefault}{$v_3$}%
}}}}
\put(3663,-3177){\makebox(0,0)[lb]{\smash{{\SetFigFont{12}{14.4}{\rmdefault}{\mddefault}{\updefault}{$v_2$}%
}}}}
\put(3426,-3413){\makebox(0,0)[lb]{\smash{{\SetFigFont{12}{14.4}{\rmdefault}{\mddefault}{\updefault}{$v_1$}%
}}}}
\end{picture}}
\caption{(a) A  domino tiling of a double Aztec rectangle corresponds to a family of non-intersecting (partial) Schr\"{o}der paths. (b) The correspondence for the minimal tiling $T_0$.}
\label{double7}
\end{figure}

\begin{figure}\centering
\includegraphics{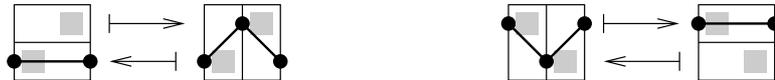}
\caption{An elementary move raises the rank of the
tiling $T$ by one (left-to-right, respectively) if and only if the corresponding family $\textbf{P}$ of non-intersecting Schr\"{o}der paths increases the underneath area by one.}
\label{elementmove}
\end{figure}

As shown in Figure \ref{elementmove}, each elementary move rising the rank of the tiling $T$ by one gives a deformation of some path in $\textbf{P}$
 increasing the underneath area by one (here, the ``ground" is the horizontal line passing $u_1$ and $v_1$). Thus, the difference between the rank of a tiling
 $T$ and the total underneath area of its corresponding path family $\textbf{P}$ does \emph{not} depend on the choice of $T$. It is easy to see the path family $\textbf{P}_0$
  corresponding to $T_0$ has the smallest total underneath area (see Figure \ref{double7}(b)). That also explains the choice of $T_0$ as the \emph{minimal} tiling of the region in Section 2.

\begin{proof}[Proof of Theorem \ref{mainweight}]
We apply the weight assignment $wt_{1,1}^{t,q}(q^2)$ to the dominoes of the double Aztec rectangle $\mathcal{DR}^{m_2,n_2}_{m_1,n_1,k}$. Consider a family of non-interesting (partial)
Schr\"{o}der paths $\textbf{P}$ corresponding to a tiling $T$ of the region. Denote by $\alpha(T)$ we exponent of $q$ in the expression of the weight $wt(\textbf{P})$.
We have $r(T)-\alpha(T)$ is a constant that does not depend on $T$. Thus,
\begin{equation}\label{finaleq1}
r(T)-\alpha(T)=r(T_0)-\alpha(T_0)=-\alpha(T_0).
\end{equation}

By an explicit evaluation, one gets
\small{\begin{align}\label{finaleq2}
\alpha(T_0)&=\frac{2m_2(m_2-1)(m_2+1)}{3}+(m_2-k+1)(m_2+n_2-1)(n_2-m_2)\notag\\
&\quad \quad +\sum_{i=0}^{m_1-1}(2(m_1-i)(k+n_2+2i) +(m_1-i)^2)\notag\\
&=\frac{2m_2(m_2-1)(m_2+1)}{3}+(m_2-k+1)(m_2+n_2-1)(n_1-m_1)\notag\\
&\quad \quad +\frac{m_1(m_1+1)(2k+2m_1+2n_2-1)}{2}.
\end{align}}

\normalsize Considering the numbers of up, down and level steps  in  the family of lattice paths $\textbf{P}$ (denoted by $\up(\textbf{P})$, $\down(\textbf{P})$, and $\level(\textbf{P})$, respectively), we have
\begin{align*}
\up(\textbf{P})+\down(\textbf{P})+2\level(\textbf{P})&=m_2(m_2+1)+2(n_1-m_1)(m_2-k+1)\notag\\
&+(n_1-m_1)(m_1+k)+m_1(m_1+1),
\end{align*}
so
\begin{align}\label{finaleq3}
v(T)&=\up(\textbf{P})+\down(\textbf{P})\notag\\
&=\frac{m_2(m_2+1)}{2}+(n_1-m_1)(m_2-k+1)+\frac{(n_1-m_1)(m_1+k)}{2}\notag\\
&\quad+\frac{m_1(m_1+1)}{2}-\level(\textbf{P}).
\end{align}
By (\ref{finaleq1}), (\ref{finaleq2}) and (\ref{finaleq3}), we have
\begin{align*}
\sum_{T}t^{v(T)}q^{r(T)}&=t^{\frac{m_2(m_2+1)}{2}+(n_1-m_1)(m_2-k+1)+\frac{(n_1-m_1)(m_1+k)}{2}+\frac{m_1(m_1+1)}{2}}q^{\alpha(T_0)}\sum_{\textbf{P}}t^{-\level(\textbf{P})}q^{\alpha(T)}\notag\\
&=t^{\frac{m_2(m_2+1)}{2}+(n_1-m_1)(m_2-k+1)+\frac{(n_1-m_1)(m_1+k)}{2}+\frac{m_1(m_1+1)}{2}}q^{\alpha(T_0)}\notag\\
& \quad \quad\times\M\left(\mathcal{DR}_{m_1,n_1,k}^{m_2,n_2}(1,1,t^{-1},q,q^2)\right),
\end{align*}
and the theorem follows from Theorem \ref{weightdouble}.
\end{proof}


\end{document}